\documentclass[12pt]{amsart}
\usepackage{geometry}
\usepackage{graphicx}
\usepackage{amssymb}
\usepackage{epstopdf}
\usepackage{amsmath,amscd}
\usepackage{amsthm}
\usepackage{url,verbatim}
\usepackage{stmaryrd}
\usepackage{longtable}
\usepackage[table]{xcolor}

\RequirePackage[colorlinks,citecolor=blue,urlcolor=blue]{hyperref}
\usepackage{breakurl}
\theoremstyle{plain}
\newtheorem{theorem}{Theorem}

\newtheorem{lemma}[theorem]{Lemma}

\newtheorem{corollary}[theorem]{Corollary}

\newtheorem{remark}[theorem]{Remark}
\newtheorem{question}[theorem]{Question}

\newcommand\es{\varnothing}

\newcommand\ol{\overline}
\newcommand\Aut{\mathrm{Aut}}

\newcommand\RR{{\mathbb R}}
\newcommand\ZZ{{\mathbb Z}}
\newcommand\NN{{\mathbb N}}
\newcommand\NNp{{\mathbb N_0}}

\newcommand\LL{{\mathbb L}}

\newcommand\om{\omega}
\newcommand\g{\gamma}

\newcommand\si{\sigma}

\newcommand\qq{\qquad}
\newcommand\q{\quad}
\newcommand\resp{respectively}

\newcommand\oo{\infty}
\newcommand\sG{{\mathcal G}}

\newcommand\sK{{\mathcal K}}

\newcommand\sS{{\mathcal S}}
\newcommand\sW{{\mathcal W}}

\newcommand\Ga{\Gamma}

\newcommand\de{\delta}

\newcommand\id{{\bf 1}}

\newcommand\pc{p_{\text{\rm c}}}

\newcommand\Om{\Omega}
\newcommand\la{\lambda}

\newcommand\pd{\partial}

\newcommand\setm{\setminus}
\newcommand\ghf{graph height function}

\newcommand\ccl{\cellcolor{blue!25}}
\newcommand\cclr{\cellcolor{pink}}
\newcommand\larrow{\longrightarrow}

\newcommand\CG{\text{\rm CG}}

\newcounter{mycount1}\newcounter{mycount2}\newcounter{mycount3}\newcounter{mycount}

\newenvironment{numlist}{\begin{list}{\rm\arabic{mycount2}.}%
   {\usecounter{mycount2}\labelwidth=1cm\itemsep 0pt}}{\end{list}}
\newenvironment{letlist}{\begin{list}{\rm(\alph{mycount3})}%
   {\usecounter{mycount3}\labelwidth=1cm\itemsep 0pt}}{\end{list}}

\numberwithin{equation}{section}
\numberwithin{theorem}{section}
\numberwithin{figure}{section}
\numberwithin{table}{section}

\title{Connective constants of Grigorchuk graphs}
\author{Geoffrey R.\ Grimmett}
\address{Statistical Laboratory, Centre for
Mathematical Sciences, Cambridge University, Wilberforce Road,
Cambridge CB3 0WB, UK} 
\email{grg@statslab.cam.ac.uk}
\urladdr{\url{http://www.statslab.cam.ac.uk/~grg/}}

\begin{document}

\begin{abstract}
The connective constant $\mu(G)$ of a graph $G$ is the exponential growth rate
of the number of self-avoiding walks starting at a given vertex. We prove upper and lower bounds for the 
connective constants of Cayley
graphs $G_\om$ of a general Grigorchuk group  encoded by a sequence $\om\in\{0,1,2\}^\NN$.
In particular, $\mu(G_\om) > \phi$ for any such Cayley graph (subject to a simple condition on $\om$), 
where $\phi:= \frac12(1+\sqrt 5)$ is
the golden mean. 
This extends earlier work of the author and Zhongyang Li
in \lq\lq Cubic graphs and the golden mean'', Discrete Math.\ 343 (2020), article 111638,
where it was conjectured that $\mu(G)\ge\phi$ for all infinite, vertex-transitive, cubic graphs.
The current work includes an analysis of the proportions of appearances of given label-sequences in the 
orbital Schreier graphs of general Grigorchuk groups. 
\end{abstract}

\date{\today}

\keywords{Self-avoiding walk, connective constant, cubic graph,
vertex-transitive graph, Grigorchuk group, Cayley graph, Schreier graph}
\subjclass[2010]{05C30, 20F65, 82B20}
\maketitle

\section{Introduction}\label{sec:int}

Let $G$ be an infinite, (vertex-)transitive, simple, rooted graph, 
and let $\si_n$ be the number of $n$-step self-avoiding walks (SAWs) starting from the root.
It was proved by Hammersley \cite{jmhII} in 1957 that the limit
$\mu=\mu(G):=\lim_{n\to\oo}\si_n^{1/n}$ exists, 
and he called $\mu$ the \lq connective constant' of $G$. 
A great deal of attention has been devoted to counting SAWs since that introductory mathematics paper, 
and survey accounts of many of the main features of the theory may be found at \cite{bdgs,GL-amen, ms}.

A graph is called \emph{cubic} if every vertex has degree $3$, and \emph{transitive} if it is vertex-transitive.
Let $\sG_d$ be the set of connected, infinite, transitive, simple graphs with degree $d$, and
let $\mu(G)$ denote the connective constant of $G \in \sG_d$. 
The letter $\phi$ denotes the golden mean $\phi:=\frac12(1+\sqrt 5) \approx 1.618\cdots$. The following question was stated in \cite{GL-Comb} and investigated but not fully answered in \cite{GL20}. 

\begin{question}\label{qn:big}
Is it the case that $\mu(G) \ge \phi$ for $G \in \sG_3$?
\end{question}

Our principal purpose is to answer this question affirmatively for the Cayley graphs of Grigorchuk groups,
therein extending \cite[Thm 8.1]{GL20}.
We state this loosely here, with a mathematically precise, quantified, statement provided at Theorem \ref{thm:Grig2}. We recall that the general Grigorchuk group is encoded by an 
infinite sequence $\om=(\om_0,\om_1,\om_2,\dots)\in\{0,1,2\}^\oo$; 
see Section \ref{sec:Grig} for the details of this. 
Let $Q$ be the set of all sequences $\om$ that include all three values
from the set $\{0,1,2\}$.

The Grigorchuk groups are usually defined with four generators, of which three (taken with the identity)
form a Klein $4$-group. Therefore,
a Grigorchuk group has three distinct $3$-generator Cayley graphs.

\begin{theorem}\label{thm:0}
Let $\om\in\{0,1,2\}^\oo$.
Each Cayley graph $\CG_\om$ of a ($3$-generator) general Grigorchuk group $\Ga_\om$
satisfies $\mu(\CG_\om)\ge\phi$. Furthermore, the inequality is strict if $\om\in Q$.
\end{theorem}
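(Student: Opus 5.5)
We need a plan. Recall the structure: $\Ga_\om$ is generated by $a,b_\om,c_\om,d_\om$, and $\{1,b,c,d\}$ is a Klein 4-group. A 3-generator Cayley graph uses $a$ and two of $b,c,d$, say $b,c$; it is then a cubic graph whose edges carry labels $a,b,c$. The approach, following \cite{GL20}, is to pass to the orbital Schreier graph of the action on the boundary of the binary tree. The Schreier graph on the orbit of a point in $\{0,1\}^\NN$ is a two-ended path (a line). Its edges are labelled $a$, or doubled $\{x,y\}$-edges, and it carries loops at vertices where a generator fixes the point. The Cayley graph covers the Schreier graph. SAWs in the Cayley graph are counted from below by lifts of suitable walks, so $\mu(\CG_\om)$ is at least the growth rate of a family of walks on the labelled line.

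\emph{Step 1.} Describe the labelled line explicitly. Along the line, $a$-edges alternate with $\{b,c,d\}$-type positions. At each such position exactly one of $b,c,d$ acts trivially: it is a loop there, and the other two give a double edge. Which generator is trivial is determined by the sequence $\om$ and the position's $2$-adic valuation, via $\om_k$ at level $k$.

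\emph{Step 2.} Build a Cayley-graph SAW model that achieves growth $\phi$ at least. In the chosen Cayley graph with generators $a,b,c$, take walks that alternate $a$ with a block from $\{b,c\}$. At a position where the ``missing'' generator $d=bc$ acts trivially, the words $b$ and $c$ lead to distinct Cayley vertices projecting to the same Schreier vertex, giving a genuine binary choice. At other positions, one of $b,c$ is a loop, so we may insert $b$, $c$, $bc$, or $cb$ into a word of type $a\,x\,a\,\dots$. The aim is to show that at every step of length two one has a choice structure at least as rich as Fibonacci, namely words with no two consecutive non-$a$ letters equal. Distinctness of the resulting group elements, which is what makes the walks self-avoiding, will follow by projecting to the line: walks that make net progress along the line cannot revisit, and different choices of the loop/non-loop letter give distinct Cayley vertices because the line projection together with the label sequence determines them. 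This gives $\si_n \ge c F_n$ and hence $\mu \ge \phi$. It reproduces the \cite[Thm 8.1]{GL20} argument, extended to all $\om$ and to all three Cayley graphs.

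\emph{Step 3 (strictness for $\om\in Q$).} Find extra local moves: a pattern in the label sequence of the line where a walk can perform a short detour, for instance a back-track through a loop position $a\,x\,a$ making a closed cycle in the Schreier graph that does not close in the Cayley graph. Such a detour adds an alternative segment of length $m$ beyond the Fibonacci count. If such patterns occur with positive density $\de>0$ along every line (uniformly in the starting point), then the generating function gains a strict improvement. A standard renewal/substitution argument then gives $\mu\ge \phi+\epsilon(\de,m)>\phi$. The main obstacle is proving positive density. When all three values appear in $\om$, the label at level $k$ cycles through all three letters. I would compute the asymptotic proportion of positions of valuation $k$ (namely $2^{-k-1}$) and show that the required label-sequence (a window where the trivial generator differs from the two chosen ones in a specific order) occurs with frequency bounded below by $2^{-K}$, where $K$ is the first index by which all three values have appeared. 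If $\om\notin Q$, the chosen pair may never see the needed pattern, which explains why equality may occur there. Checking that these detours really yield self-avoiding Cayley walks, via injectivity of the lift, is the delicate combinatorial point.
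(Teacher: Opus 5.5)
Your overall framework (lift rightward-or-loop walks on the orbital Schreier graph, multiply unit-by-unit weights, use level densities $2^{-k}$) matches the paper's. However, Step~2 has a genuine, quantitative gap: the walks you count do not grow like $\phi^n$. Write $x,y$ for the two retained generators in $\{b_\om,c_\om,d_\om\}$ and $z=xy$ for the deleted one.

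\begin{itemize}
\item Where $z$ labels the loop, you count only $x\,a$ and $y\,a$. That is a unit weight $2\xi^2$, and $2\xi^2=2/\phi^2\approx0.764<1$ at $\xi=1/\phi$.
\item Where, say, $x$ labels the loop, the crossing blocks are $y,\ xy,\ yx,\ xyx$. Your list $b,c,bc,cb$ includes a lone loop letter, which does not advance, and omits $xyx$. The unit weight is $\xi^2(1+\xi)^2$, which is \emph{exactly} $1$ at $\xi=1/\phi$ since $1+1/\phi=\phi$.
\end{itemize}

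So there is no slack to absorb the deficient units, and they have positive density. When $z=\de$ (the generator with $\de_0=I$), every odd unit is deficient. When $z\in\{\beta,\sigma\}$, a proportion $\tfrac12\sum_{a\ge1}2^{-a}1(\la_{\om_a}=z)$ of units is deficient. Hence for $G_\om(\neg\de)$ always, and for all three Cayley graphs whenever $\om\in Q$, your family has growth rate strictly below $\phi$. The claimed Fibonacci structure ``at every step of length two'' fails, and $\si_n\ge cF_n$ does not follow. In the first group with $d$ deleted, which you say you are reproducing, every odd unit is of this deficient type, and the argument there genuinely needs the move below.

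The missing idea is the kite detour of Lemma~\ref{eq:new901}. In $G_\om(\neg z)$ each coset $g\{\id,b_\om,c_\om,d_\om\}$ spans a $4$-cycle $g,gx,gxy,gy$, and $yxy=x$ because the Klein group is abelian. So at a deficient unit you may replace $x\,a$ by $yxy\,a$ (and $y\,a$ by $xyx\,a$). This goes round three sides of the kite and exits from the same vertex $gx$, raising the unit weight to $2\xi^2+2\xi^4\approx1.056>1$ at $1/\phi$.

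Self-avoidance cannot come from ``net progress along the line'', since the detour moves backwards in the Schreier graph. It comes from Lemma~\ref{lem:0}(b): the lifted rightward walk enters each kite at most once, using only $g$ and $gx$ there. Hence $gy$ and $gxy$ are fresh, and all later vertices are unchanged.

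With this, every unit has weight at least $1$ at $1/\phi$, giving $\mu\ge\phi$. Strictness then comes from the same device, not from an additional pattern:
\begin{itemize}
\item For $z=\de$, all odd units are improved, giving $\mu\ge\gamma\approx1.635$ for every $\om$.
\item For $z\in\{\beta,\sigma\}$, one needs the improved units to have positive density. That is Corollary~\ref{cor:1}, via the equidistribution Theorem~\ref{thm:label}. Membership $\om\in Q$ guarantees $\la_{\om_a}=z$ for some $a\ge1$, because $\la_{\om_0}=\de$.
\end{itemize}

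Your Step~3 detour through a loop position $a\,x\,a$ is a different move, and you leave its self-avoidance unresolved. Also, the relevant condition is not that labels ``cycle through'' all letters: the label $\la_{\om_k}$ at level $k$ is fixed. What matters is whether $z$ occurs among $\la_{\om_1},\la_{\om_2},\dots$.
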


See Theorem \ref{thm:Grig2} for a fuller statement that includes rigorous upper 
and lower bounds for the connective constants in question.

Recall that the \lq ladder graph' of Figure \ref{fig:ladder} has connective constant $\phi$.
One useful way to show that $G\in\sG_3$ satisfies $\mu(G)\ge\phi$ is to find a injection from the set of SAWs on $\LL$ into
the set of SAWs on $G$. That is not the approach used here.

\begin{figure}
\centerline{\includegraphics[width=0.7\textwidth]{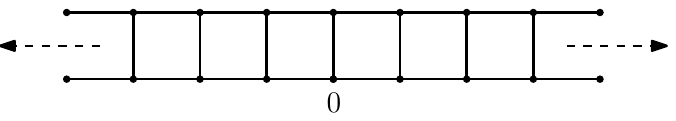}}
\caption{The ladder graph $\LL$ with marked root. }
\label{fig:ladder}
\end{figure}

Several categories of cubic graphs are shown in \cite{GL20}  to satisfy $\mu\ge\phi$, but the general question
remains open. One may consider its restriction to Cayley graphs of finitely generated groups
(all Cayley graphs in this paper are in their \emph{simple} form, that is, edges are undirected, and
parallel edges are allowed to coalesce). 

It is explained in \cite{GL20} that the SAW analysis of such Cayley graphs
varies depending on the number of ends of the group. Theorem 10.2 of \cite{GL20} proves the required  inequality
for two-ended, cubic, Cayley graphs, and we note in passing the extension of this by Georgakopoulos and Wendland
\cite{GW23} to all two-ended, transitive, cubic graphs.

\begin{theorem}  [\mbox{\cite[Thm 1.3]{GW23}, \cite[Thm 10.2]{GL20}}]  \label{thm:2endgp}
Let $d\ge 3$ and let $G\in\sG_3$ be  two-ended. 
Its connective constant
satisfies $\mu(G) \ge \phi$.  
\end{theorem}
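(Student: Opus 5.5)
The approach is to reduce the statement to a renewal inequality for \emph{bridges}, using the structure of two-ended transitive graphs. (The hypothesis $d\ge3$ in the statement appears to be vestigial: only $G\in\sG_3$ is used.)

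\textbf{Structure and paths.} Since $G$ is infinite, transitive and two-ended, $\Aut(G)$ contains an automorphism $\tau$ that acts as a translation between the two ends. The group $\langle\tau\rangle$ has only finitely many vertex-orbits. One can therefore choose a $\tau$-equivariant height function $h:V\to\ZZ$ with two properties:
\begin{itemize}
\item $h(\tau v)=h(v)+k$ for some fixed $k>0$;
\item $|h(u)-h(v)|$ is uniformly bounded over edges $uv$.
\end{itemize}
Next, the edge-connectivity of a connected transitive graph equals its degree, here $3$. By Menger's theorem there are then three edge-disjoint bi-infinite paths joining the two ends. In a cubic graph, edge-disjoint paths are automatically vertex-disjoint, since a common vertex would need degree at least $4$. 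After a finite modification we may take these paths to be $\tau^m$-invariant for some $m\ge1$.

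\textbf{Renewal reduction.} A \emph{bridge} is a SAW $\pi$ with $h(\pi_0)<h(\pi_i)\le h(\pi_{|\pi|})$ for $i\ge1$; it is \emph{irreducible} if it is not the concatenation of two bridges. By the standard Hammersley--Welsh/Kesten renewal argument, which adapts to transitive graphs with a translation-invariant height function, the bridge growth rate equals $\mu(G)$. Moreover, if $b_n$ denotes the number of irreducible bridges of length $n$ starting at a fixed vertex (with the minor bookkeeping needed for the finitely many orbits), then $\mu=1/x_c$, where $x_c$ solves $\sum_n b_n x_c^n=1$. It therefore suffices to prove
\[
\sum_{n} b_n\,\phi^{-n}\ \ge\ 1 .
\]

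\textbf{Counting irreducible bridges.} To obtain this, we exhibit explicitly a family of irreducible bridges moving along and between the three disjoint bi-infinite paths. Because every vertex has degree $3$, each vertex on one path carries exactly one further edge. By transitivity and the finiteness of the quotient $G/\langle\tau\rangle$, these extra edges lead, after bounded detours, to another of the paths. Each such switching opportunity supplies an alternative irreducible bridge. The aim is to show that these alternatives contribute, in total, at least the amount $\sum_{n\ge1}F_n\phi^{-n}$-type mass that makes the ladder $\LL$ critical at exactly $1/\phi$.

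The argument splits into cases according to the finite quotient graph $G/\langle\tau^m\rangle$, namely according to how the third edges at the path vertices attach: directly as rungs, via a finite "bubble", or as edges lying within a single path.
\begin{itemize}
\item If there are direct rungs, $G$ contains $\LL$-like substructures, and an injection of ladder SAWs gives $\mu\ge\phi$ immediately.
\item If there are bubbles, the extra paths inside each bubble contribute further terms to $\sum b_n x^n$, which must compensate for the longer rungs.
\end{itemize}

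\textbf{Main obstacle.} The hardest step will be this compensation when the connections between the paths are long. The worry is that long detours give a subdivided ladder, whose connective constant is smaller than $\phi$. The first thing I would try is to use the cubic condition at \emph{every} vertex of a detour: each interior vertex of a long rung itself has a third edge. That edge yields either another route back to a path or further branching, so the branching lost by lengthening a rung is regained. This should give a termwise comparison showing that the irreducible-bridge series at $x=1/\phi$ is at least that of $\LL$, which equals $1$.
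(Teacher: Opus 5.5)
Your setup already breaks at the first step. That the edge-connectivity of a connected transitive graph equals its degree is Mader's theorem for \emph{finite} graphs, and it is false for infinite ones: $\ZZ$ and the $3$-regular tree both have edge-connectivity $1$. More to the point, the ladder $\LL$ is itself two-ended, cubic and transitive. Its ends are separated by the two horizontal edges between consecutive rungs, so by Menger it has only two edge-disjoint end-to-end paths, not three.

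The renewal step is also looser than you say. A height function that is merely $\langle\tau\rangle$-equivariant is difference-preserved only by a quasi-transitive group. Bridges therefore cannot simply be translated and concatenated, and you need a multi-type renewal indexed by orbits. The resulting criterion is a spectral-radius condition on a matrix of generating functions, not $\sum_n b_n x^n=1$. (Only the elementary direction ``irreducible-bridge mass at $1/\phi$ is $\ge1$ implies $\mu\ge\phi$'' is needed anyway.)

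The decisive gap is that the inequality $\sum_n b_n\phi^{-n}\ge 1$, which is the whole content of the theorem, is never proved; your last two sections describe what you would like to be true. The rung case is not `immediate'. If rungs occur only at a periodic subset of path vertices, what you have found is a subdivided ladder. Its connective constant is strictly below $\phi$: with rungs at every second vertex it is the plastic number $\approx1.325$. So no injection of ladder SAWs is available. The proposed compensation, using the third edge at each interior vertex of a long detour, is not an argument either. Nothing guarantees that those edges lead anywhere usable for extending a bridge without self-intersection, or that they produce distinct irreducible bridges of suitable lengths.

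The missing idea is a device that forces uniform branching at \emph{every} vertex. In the cited proof (Georgakopoulos--Wendland, following Grimmett--Li), this device is a non-constant, Lipschitz, harmonic function $h$ that is skew-difference-invariant under a transitive group of automorphisms. Harmonicity and invariance together force the increments of $h$ along the three edges at each vertex to be one fixed triple with sum $0$, up to a global sign. That uniform local structure is what gets converted into a Fibonacci-type lower bound on the number of SAWs, with no case analysis on the quotient $G/\langle\tau\rangle$. An arbitrary equivariant height function with bounded increments carries no such local information, which is exactly why your argument stalls at the `main obstacle'.
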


The proof, which is found in \cite{GW23}, proceeds as in \cite{GL20} by constructing a non-constant, Lipschitz, 
\lq skew-difference-invariant' harmonic function
on $G$.
The paper \cite{GW23} contains also
an example of a two-ended, transitive, cubic graph that is not a Cayley graph, thereby answering an old question of
Watkins \cite{Wat}.

The Grigorchuk \lq branch groups' form a category of groups that has attracted great interest, and this note
is devoted to such groups. The work reported here extends \cite[Thm 8.1]{GL20} which was concerned
with the so-called first 
Grigorchuk group. In the context of self-avoiding walks, Grigorchuk groups are special in that they
do not possess so-called \ghf s (see \cite[Cor.\ 9.2]{GL-nonamen} and \cite[Thm C]{AB}).

We remind the reader of the use of generating functions in the context of SAWs. Let $\sW$ be the set of SAWs 
from a given root in an infinite, transitive  graph $G$, and let $\sigma_n$ be the number of
such $n$-step walks. The SAW generating function
is given as
\begin{equation}\label{eq:ZG}
Z_G(\xi) = \sum_{w\in\sW} \xi^{|w|} = \sum_n \sigma_n \xi^n,\qq \xi\in\RR,
\end{equation}
where $|w|$ is the number of edges of $w$. The radius of convergence of $Z_G$ is $1/\mu(G)$. 

This paper is laid out as follows. 
Grigorchuk groups are defined in Section \ref{sec:Grig}, 
the main theorem is presented in Section \ref{sec:main},
and it is proved in Section \ref{sec:pfmain}. 

One of  the contributions of the current work is a direct derivation of the asymptotic proportions of 
appearances of given edge-label patterns in the orbital Schreier graphs of general Grigorchuk groups indexed 
by a vector $\om$. This is described in Section \ref{sec:abc}, and the relevant theorem is found at Theorem \ref{thm:label}.

The set $\{0,1,2,\dots\}$ of non-negative integers is
denoted $\NNp$.  Given a finite string $w$, we write $w^k$ (\resp, $w^\oo$) for
the string $ww\cdots w$ comprising $k$ copies of $w$ (\resp, a singly infinite string $ww\cdots$
of copies of $w$).

\section{The Grigorchuk groups}\label{sec:Grig}

\subsection{The first Grigorchuk group}

The (first) Grigorchuk group $\Ga$ was introduced in \cite{Grig80} (see also the more recent
papers \cite{Grig84, RG05, GLN}) as a group of intermediate growth. It
is defined as follows.
Let $T$ be the rooted binary tree with root labelled $\es$. 
The vertex-set of $T$ can be identified with the set of finite strings $u$ having entries $0$, $1$, 
where the empty string corresponds to the root $\es$,
 and the two children of a vertex labelled $u$ are labelled $u0$ and $u1$.
Let $T_u$ denote the subtree of $T$ with root $u$ together with its \lq offspring'. 

Let $\Aut(T)$ be the automorphism group of $T$, and
let $a\in\Aut(T)$ be the automorphism
that, for each string $u$, interchanges the two vertices $0u$ and $1u$. 

Any $\g\in\Aut(T)$ may be applied
in a natural way to either subtree $T_u$, $u=0,1$. 
Given two elements 
$\g_0,\g_1\in\Aut(T)$, we define $\g=(\g_0,\g_1)$ to be the automorphism
of $T$ obtained by applying $\g_0$ to $T_0$ and $\g_1$ to $T_1$.
Define automorphisms $b$, $c$, $d$ of $T$ recursively as follows:
\begin{equation}\label{eq:grigrels}
b=(a,c),\quad c=(a,d),\quad d=(\id,b),
\end{equation}
where $\id$ is the identity automorphism. 

The first Grigorchuk group $\Ga$ is defined as the subgroup of  $\Aut(T)$ generated by the 
set $S:=\{a,b,c,d\}$. 
Since every generator has order $2$, one may define its undirected Cayley graph
$G$ with degree $4$ (as in Figure \ref{fig:G2}). We re-use the notation $\id$ for the identity element of $\Ga$.

It is standard and easily checked that
\begin{align}
\label{eq:commute}
b=cd=dc,\q
c=db=bd,\q
d=bc=cb.
\end{align}
Therefore, the set $S$ is not a minimal generator set of $\Ga$. More specifically,
$\Ga$ is generated by any set of the form $S(\neg z) := S\setm\{z\}$ for any given $z\in\{b,c,d\}$.
We write $G(\neg z)$ for the (degree-$3$) Cayley graph of $\Ga$ with generator-set $S(\neg z)$.

The $3$-neighbourhood of  $\id$ in the Cayley graph $G(\neg d)$ of $\Ga$
is drawn in Figure \ref{fig:G} (subject to a relabelling of the vertices). 

\subsection{General Grigorchuk groups}

We turn now to the general Grigorchuk groups of \cite{Grig84} (see also \cite{Bar17,MP99}), which
are encoded as follows in terms of an infinite  vector $\om$ of $0$s, $1$s, and $2$s.
The first Grigorchuk group corresponds to the vector $012012012\dots$, abbreviated to $(012)^\oo$.

Let automorphisms $I(u)$ and $\Pi(u)$ act at vertices $u$ of $T$ in the following manner. The map
$I(u)$ is the identity map that makes no change to $T_u$, whereas $\Pi(u)$ interchanges
the two children $u0$ and $u1$ and their sub-trees.

\begin{figure}[htbp]
\centerline{\includegraphics*[width=0.7\hsize]{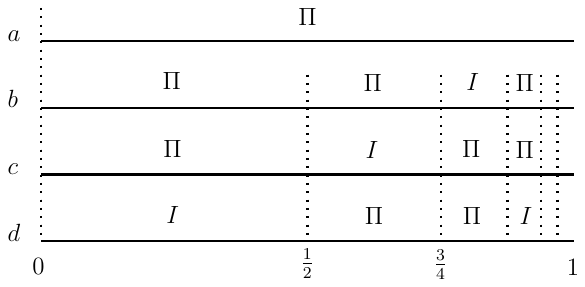}}
   \caption{The unit interval $[0,1)$ may be partitioned into the sub-intervals $[1-2^{-n},1-2^{-n-1})$ for $n\ge 0$.
   The generators $a$, $b$, $c$, $d$ may be expressed as involutions of these sub-intervals.
   Each involution is either the identity $I$, or the permutation $\Pi$ that
   interchanges the two halves of the corresponding sub-interval.  Recalling \eqref{eq:012}--\eqref{eq:wedge}, 
   the illustrated  example is encoded in the word $(012)^\oo$. 
   The more general generator-set $\{a, b_\om, c_\om,d_\om\}$
   may be described similarly by adapting the columns to the elements of the vector $\om$.}
   \label{fig:int}
\end{figure}

Define the symbolic $3$-vectors
\begin{equation}\label{eq:012}
\ol 0 = \begin{pmatrix} \Pi \\ \Pi \\ I\end{pmatrix} ,\quad
\ol 1 = \begin{pmatrix}\Pi \\ I \\ \Pi\end{pmatrix},\quad
\ol 2 = \begin{pmatrix} I \\ \Pi \\ \Pi \end{pmatrix}.
\end{equation}
Let $\om=(\om_n: n=0,1,2,\dots) \in \Om :=\{0,1,2\}^\NNp$. 
(This indexing convention differs slightly from the usual one, but is convenient in the current setting.)
Combine the column vectors $\ol {\om_0}, \ol{\om_1},\dots$ into a $3\times \oo$ matrix $M$
and let $(b_n: n=0,1,2,\dots)$ (\resp, $(c_n)$, $(d_n)$)
be the top row-vector (\resp, second, third row-vector) of $M$. 
We define four automorphisms $a$, $b_\om$, $c_\om$, $d_\om$ by $a=\Pi(\es)$, and
\begin{align}\label{eq:wedge}
b_\om &= b_0(0)\circ b_1(10) \circ \cdots \circ b_k(1^k0)\circ \cdots,
\end{align}
with a similar formula for $c_\om$ (\resp, $d_\om$) with each $b_n$ replaced by $c_n$ (\resp, $d_n$).
Here, 
$b_k(1^k0)$ is an automorphism acting as $b_n$ on the sub-tree $T_{1^k0}$.
In \eqref{eq:wedge}, for automorphisms $\alpha$, $\beta$ that act on disjoint vertex-subsets of $T$,
the notation $\alpha\circ \beta$ means that they act simultaneously. 

It is helpful to illustrate these generators
as involutions of sub-intervals of $[0,1)$, in the style of the initial papers in the field, namely \cite{Grig80,Grig84}.
See Figure \ref{fig:int}. 

The general Grigorchuk group $\Ga_\om$ is defined as the group generated by
the set $S_\om:=\{a, b_\om, c_\om, d_\om\}$, with Cayley graph denoted $G_\om$. 
Since $\Pi^2=I$,
the elements of $S_\om$  have order $2$ and,  since $\Pi^2=I$ and $\Pi I=I\Pi=I$, we have that
\begin{equation}\label{eq:gcommute}
b_\om=c_\om d_\om=d_\om c_\om,\ 
c_\om=d_\om b_\om=b_\om d_\om,\ 
d_\om=b_\om c_\om=c_\om b_\om, \qq\text{for } \om\in \Om.
\end{equation}
Therefore, $S_\om$ is not a minimal generating set of $\Ga_\om$, and
one may discard any single element of $\{b_\om, c_\om, d_\om\}$
to obtain an undirected, degree-$3$, Cayley graph denoted $G_\om(\neg z)$ for $\Ga_\om$, 
where $z$ denotes the deleted generator. 
By \eqref{eq:gcommute}, the set $\{\id,b_\om,c_\om,d_\om\}$ forms a Klein $4$-group.

The $4$-generator Cayley graph $G_\om$ of $\Ga_\om$ is sketched in Figure \ref{fig:G2}. A notable
feature of this graph is 
the regular appearance of non-intersecting copies of the complete graph $K_4$, each of which represents a translate of
the above Klein $4$-group. These subgraphs are termed \emph{kites}, and we write $\sK$ for the set of all kites.

\begin{figure}
\centerline{\includegraphics*[width=0.45\hsize]{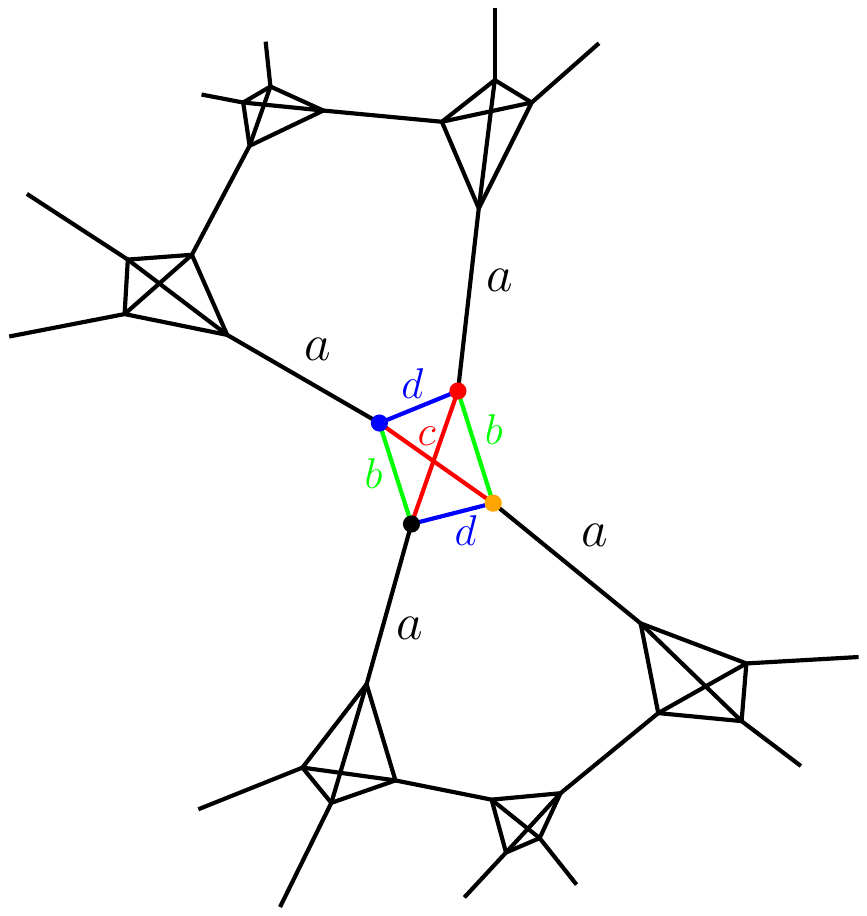}\quad\raisebox{2cm}{\includegraphics*[width=0.2\hsize]{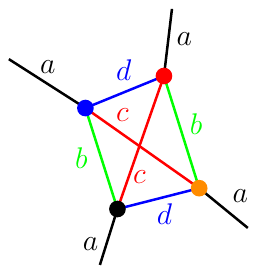}}}
   \caption{Part of the Cayley graph $G$ of the first Grigorchuk group $\Ga$. The Cayley graph $G_\om$
   of the general group $\Ga_\om$ is similar but the cycle structure may differ (though the kites remain).
   The letters $b$, $c$, $d$ are then shorthand for $b_\om$, $c_\om$, $d_\om$.
   \emph{Left}: The black circle is the identity $\id$ and the central kite is the
   complete graph on the four vertices $\id$, $b$, $c$, $d$. 
   \emph{Right}: The central kite when expanded. }
   \label{fig:G2}
\end{figure}

\section{Main theorem}\label{sec:main}

Let $\om\in\Om$.
Recall that $G_\om\in\sG_4$, and $G_\om(\neg z) \in \sG_3$ for $z\in \{b_\om,c_\om,d_\om\}$.

\begin{theorem}\label{thm:Grig2}
Let $\om\in\Om$. 
\begin{letlist}
\item
The connective constant $\mu(G_\om)$ satisfies 
$\tau\le \mu < \tau'$
where $\tau\approx 2.091$ is the reciprocal of the positive root of $2\xi^2(1+\xi)^2-1=0$,
and $\tau'\approx 2.516$ is the reciprocal of the positive root of $3\xi^2(1+2\xi+2\xi^2)-1=0$.

\item
Let $\de=\de(\om)\in\{b_\om,c_\om,d_\om\}$ be such that $\de_0=I$. 
The connective constant $\mu(G_\om(\neg \de))$ satisfies 
$\gamma \le \mu  < \gamma'$,
where $\gamma\approx1.635$ is the reciprocal of the positive root of $2\xi^4(1 + \xi^2)(1 +\xi)^2-1=0$,
and $\gamma' \approx 1.899$ is the reciprocal of the positive root of $2\xi^2(1+\xi+\xi^2)-1=0$.

\item
Let $\beta$, $\sigma$ be the two elements of $\{b_\om,c_\om,d_\om\}$  other than the $\de$ of part (b), and let $z\in\{\beta,\sigma\}$.
The connective constant $\mu(G_\om(\neg z))$  satisfies $\phi \le \mu  < \gamma'$,
where $\phi$ is the golden mean and $\gamma'$ is as in part (b).
Furthermore, the strict inequality $\mu(G_\om(\neg  z)) > \phi$ holds if $\om$ is such that
there exists $k\ge 1$ with $z_k=z_k(\om)=I$. 
\end{letlist}
\end{theorem}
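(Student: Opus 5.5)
The plan is to sandwich each connective constant between two explicit generating-function bounds, using the local structure of the Cayley graphs. Every vertex of $G_\om$ lies in exactly one kite (a copy of $K_4$ given by a coset of the Klein group $\{\id,b_\om,c_\om,d_\om\}$), and the kites are joined by the $a$-edges. Contracting each kite to a point gives a cubic-type structure in which $a$-edges alternate with passages through kites. A SAW therefore decomposes into alternating $a$-steps and kite-traversals, and a traversal of $K_4$ from its entry vertex to a different exit vertex has length $1$, $2$ or $3$. For the three-generator graphs $G_\om(\neg z)$ the kite is replaced by a $4$-cycle. Label its edges by the two retained generators; the traversal lengths are then determined by which generator is deleted.

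For the \emph{upper bounds} I will count walks that are merely non-backtracking at the level of kites. A walk enters a kite by an $a$-edge, leaves through one of the three other vertices by a self-avoiding path inside the kite, and then takes the next $a$-edge. In $K_4$ the exits at distance $1$, $2$, $3$ are reachable by $3$, $6$, $6$ self-avoiding paths respectively, or in a more careful grouping $1+2\xi+2\xi^2$ per exit. This gives a renewal series whose radius of convergence is the positive root of $3\xi^2(1+2\xi+2\xi^2)=1$, and hence $\mu<\tau'$. For the $4$-cycle the two exits other than the antipode give the equation $2\xi^2(1+\xi+\xi^2)=1$, and hence $\gamma'$. Strictness follows because the true SAW count excludes a positive density of patterns, namely the short cycles formed by $a$ together with the kite edges, which exist since $(a\,\delta)^4=\id$ or similar relations hold. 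The standard Kesten pattern argument then gives the strict inequality.

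For the \emph{lower bounds} I will instead construct explicit families of SAWs. Each family is built from blocks that are guaranteed not to revisit vertices, and its generating function is an explicit rational function whose singularity gives $\tau$, $\gamma$ and $\phi$. In $G_\om$ I use blocks of the form $a$ followed by a kite-traversal of length $1$ or $2$, with two choices at each step. Self-avoidance must be certified, and I will certify it through the action on the Schreier graph of the boundary orbit. That Schreier graph is a line whose edges carry labels, and one shows that the chosen words project to walks that move monotonically along the line; this gives the series $2\xi^2(1+\xi)^2$. Part (b) works similarly, except that with $\delta$ deleted the generators $\beta,\sigma$ act nontrivially at level $0$. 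Walks must then use longer blocks, which produces the factor $\xi^4(1+\xi^2)$. For part (c) the Schreier graph with $z$ deleted contains a ladder-like structure, giving $\phi$. The walks are injected into that structure via the orbital Schreier graph of $\Ga_\om$ acting on the ends of $T$, and they lift to SAWs on the Cayley graph because their Schreier projections are self-avoiding.

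The main obstacle is the strict inequality in (c). Here I will use the label-proportion result (Theorem \ref{thm:label}). When some $z_k=I$ with $k\ge1$, a certain label pattern appears in the Schreier graph with positive density. At each occurrence the projected ladder walk admits an extra detour, for example going round a $4$-cycle or $8$-cycle that the ladder lacks. Inserting these detours independently at a positive fraction of positions increases the exponential growth rate strictly beyond $\phi$. The delicate points are two: making sure that these insertions keep the walk self-avoiding, and making sure they remain distinct under projection. I would handle both by placing the insertions only at well-separated occurrences of the pattern. A subadditivity/concatenation argument would then turn the positive density into a strict increase of $\mu$.
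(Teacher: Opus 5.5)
Your upper bounds coincide with the paper's. Walks that are self-avoiding inside each kite and never reverse an $a$-edge are exactly the projections of SAWs on the free products $K_2*K_4$ and $K_2*\ZZ_4$, and strictness is the Grimmett--Li quotient (pattern) theorem. There is one small slip: $2\xi^2(1+\xi+\xi^2)$ counts all three exits of the $4$-cycle, and the antipode contributes $2\xi^3$.

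Your lower bound in (a) is also the paper's, with two corrections. The blocks are kite traversals of lengths $1,2,3$ with multiplicities $2,4,2$, coming from optional loops on either side of the crossing edge, not ``length 1 or 2''. And the Schreier projections are not self-avoiding, since they traverse loops. The lift is a SAW because a closed sub-walk in $G_\om$ would force a closed sub-walk in $\sS_\om$, which is impossible for walks that only move right or go round a loop.

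The genuine gap is in (b) and (c). After deleting a generator, each unit of $\sS_\om(\neg z)$ is one of two types:
\begin{itemize}
\item type $u_2$: a single crossing edge with a loop at each end, weight $\xi^2(1+\xi)^2$;
\item type $u_1$: a double edge whose loops have been deleted, weight only $2\xi^2$.
\end{itemize}
The walks your certification covers are those that project monotonically onto the Schreier line, and there are too few of them. In (b) every odd unit is $u_1$, so these walks give at most $2\xi^4(1+\xi)^2$ per pair of units. This is about $0.76$ at $\xi=1/\phi$, so the family grows slower than $\phi$, let alone $\gamma$. ``Longer blocks'' does not name a mechanism.

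The missing idea is to reroute each $u_1$ crossing the long way round the $4$-cycle kite. You add $\beta\sigma\beta a$ and $\sigma\beta\sigma a$ to $\{\beta a,\sigma a\}$, so that $u_1$ gets weight $A'(\xi)=2\xi^2(1+\xi^2)\ge\xi^2(1+\xi)^2$. Pairing each odd unit with the following even unit then gives $2\xi^4(1+\xi^2)(1+\xi)^2$. These walks go backwards in $\sS_\om$, so monotonicity cannot certify them. What certifies them is Lemma \ref{lem:0}(b): monotone walks enter each kite at most once. Hence any self-avoiding rerouting confined to the current kite keeps the whole walk self-avoiding, at all kites simultaneously.

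The same omission undermines (c). $\sS_\om(\neg z)$ contains no ladder. Monotone walks reach rate $\phi$ only when no $u_1$ units occur, that is, when $z_k\ne I$ for all $k\ge1$. In exactly the case where you want strictness, the $u_1$ units have positive density $\pi=\sum_a 2^{-a}1(\la_{\om_a}=z)$ among even units. Each contributes $2/\phi^2\approx0.76<1$, so your base family grows strictly slower than $\phi$. Your step ``insert detours into a $\phi$-family'' therefore has no footing, and even $\mu\ge\phi$ is unproved for such $\om$.

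The detours must be used at \emph{every} $u_1$ unit, not only at well-separated ones. Sparse insertion does not in general recover the deficit, and no separation is needed given the kite-once property. Both $\mu\ge\phi$ and $\mu>\phi$ then follow from comparing $2\xi^2(1+\xi^2)\approx1.056$ with $1=\xi^2(1+\xi)^2$ at $\xi=1/\phi$, together with the density from Corollary \ref{cor:1}.
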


The proof is found in Section \ref{sec:pfmain}. The proof of part (c) makes use of the results of Section
\ref{sec:abc} concerning the densities of label-sequences in the 
orbital Schreier graph of the ray $1^\oo$ of the tree $T$.

\begin{remark}\label{rem:2}
When $\om=(012)^\oo$, 
$\Ga_\om$ is the first Grigorchuk group $\Ga$, in which case 
$b_\om = b=\beta$, $c_\om=c=\sigma$, and $d_\om=d=\delta$.
A weaker version of part (b) was proved in \cite[Thm 8.1]{GL20} in this special case, 
namely that $\mu(G_\om(\neg d))\ge \phi$. The proof in Section \ref{sec:pfmain} of the lower bound of
Theorem \ref{thm:Grig2}(b) is a slightly streamlined version of that used in \cite{GL20}.
\end{remark}

\begin{remark}\label{rem:4}
Note that the (weak) numerical bounds in Theorem \ref{thm:Grig2}(b, c) depend 
primarily on the value of $\om_0$.
\end{remark}

\begin{remark}\label{rem:1}
The sets of SAWs used in the proofs of the lower bounds of Theorem \ref{thm:Grig2} are restricted to walks $w$
with two further properties: (i) $w$ does not arrive twice (or more)
in any given kite,  and (ii) $w$ may be extended to
an infinite SAW with the foregoing property. (Infinitely extendable SAWs were studied in \cite{GHP}.)
The final strict inequality of part (c) may be made explicit by exploring the positive root of 
the forthcoming equation \eqref{eq:improve}.
\end{remark} 

\begin{remark}\label{rem:3}
Muchnik and Pak \cite{MP99} considered the critical value $\pc(G_\om)$ of  bond percolation   on 
$G_\om$, and showed that $\pc(G_\om)<1$ for all $\om$. Such strict inequality
has been extended to graphs with super-linear growth by Duminil-Copin et al., \cite[Thm 1.3]{DGRSY}.
(The reader is referred to \cite{G99} for an introduction to percolation theory.) 
 The upper bounds of
Theorem \ref{thm:Grig2} provide lower bounds for the critical values $\pc(G_\om)$ via the well-known inequality
$\pc(H)\ge 1/\mu(H)$ for any connected, infinite, transitive graph $H$ (see the proof of \cite[eq.\ 1.13]{G99}).
\end{remark}

\section{Schreier graphs of Grigorchuk groups}\label{sec:abc}

The principal purpose of this section is to identify the labelled orbital Schreier graph of the Grigorchuk 
group $G_\om$, and to deduce the asymptotic proportion of occurrences of a given label pair
of two parallel edges. 
We begin with a short review of Schreier graphs.

A \emph{ray} of $T$ is a SAW on $T$ starting at $\es$. The collection of all infinite rays is called
the \emph{boundary} of $T$ and denoted $\pd T$. Since each $\g\in\Aut(T)$
preserves the root $\es$, the orbit of any $v \in T$ is a subset of the generation of $T$ containing $v$.
Since $\g\in\Aut(G)$ preserves adjacency, $\g$ maps $\pd T$ into $\pd T$.
An element of $\pd T$ may be expressed as an infinite word with alphabet $\{0,1\}$,
for example, $0^210^31^\oo$. 

The \emph{orbit} $\Ga \rho$ of $\rho \in \pd T$  gives rise to a labelled graph, 
called the \emph{orbital Schreier graph} of $\rho$, and denoted here by $S(\rho)$.
The vertex-set of $S(\rho)$ is $\Ga\rho$. For $\rho_1,\rho_2\in \Ga\rho$,
$S(\rho)$ has an edge between $\rho_1$ and $\rho_2$ if and only if $\rho_2=x\rho_1$ for some $x \in \{a,b,c,d\}$; 
we label this edge with the generator $x$ and call it an $x$-edge. 
Such orbital Schreier graphs have been studied in \cite{Bar17,Grig-S,GLN,VY} and the references therein.

Let $1^\oo$ denote the rightmost infinite ray of $T$, 
with orbital Schreier graph $\sS:=S(1^\oo)$ illustrated
in Figure \ref{fig:one-end}.
It is standard (see, for example, \cite[Thm 7.3]{Grig-S} and \cite[p.\ 29]{VY})
that, if $\rho \in \Ga 1^\oo$, $S(\rho)$ is graph-isomorphic to 
the \emph{singly infinite} graph $\sS$
(the edge-labels depend on the choice of $\rho$). 
If $\rho \notin \Ga 1^\oo$, $S(\rho)$ is graph-isomorphic to a certain \emph{doubly infinite} chain which does not feature in this proof. For $\om\in\Om$, 
the orbital Schreier graph $\sS_\om$ is defined similarly relative to the generator-set
$\{a,b_\om,c_\om, d_\om\}$, and is illustrated in Figure \ref{fig:som}.

\begin{figure}[htbp]
\centerline{\includegraphics*[width=0.8\hsize]{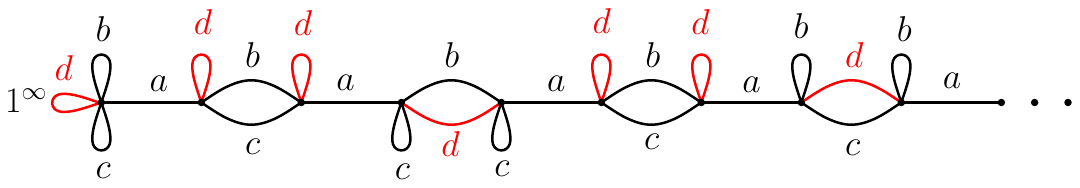}}
   \caption{The orbital Schreier graph $\sS$ of the ray $1^\oo$ for the first Grigorchuk group $\Ga$.
     The Schreier graph of $1^\oo$ associated with the smaller generator-set
$\{a,b,c, d\}\setm \{z\}$ is obtained by deleting all edges labelled $z$ (illustrated here with $z=d$ by
     drawing the edges and loops labelled $d$ in red).}
      \label{fig:one-end}
\end{figure}

\begin{figure}[htbp]
\centerline{\includegraphics*[width=0.8\hsize]{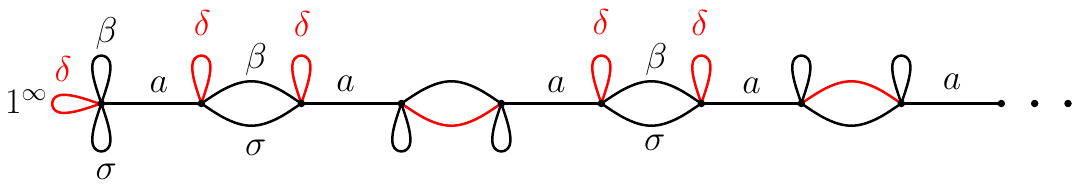}}
   \caption{The orbital Schreier graph $\sS_\om$ of the ray $1^\oo$ for the Grigorchuk group $\Ga_\om$
   with generator-set $\{a,\beta,\sigma,\de\}$.
     The automorphisms $\beta$, $\sigma$ are the two generators 
     with first entry $\Pi$, and $\delta$ is the remaining generator. Between consecutive pairs of $a$, there are two loops and
     two non-loops. Between the $k$th and $(k+1)$th such pair (with odd $k$), these edges are as labelled,
     whereas when $k$ is even these edge-labels depend on $\om$ and the edges' positions in $\sS_\om$.
     Deletion of the red edges gives rise to the Schreier graph $\sS_\om(\neg \de)$ associated with the generator-set
     $\{b_\om,c_\om,d_\om\}\setm\{\de\}$.}
      \label{fig:som}
\end{figure}

As remarked above, the first Grigorchuk group $\Ga$ may be identified as $\Ga_\om$ 
with $\om=(012)^{\oo}$.
Its orbital Schreier graph $\sS$ (see Figure \ref{fig:one-end}) is a map of the action of the
generator-set of $\Ga$ on the ray $1^\oo$.
Its leftmost vertex is $1^\oo$, and the progressive application of generators results in the sequence of 
rays given on the left side of Table \ref{fig:rays}.

\begin{table}[t]
\caption{The rays of $T$ corresponding to the first 25 vertices of the Schreier graphs $\sS_\om$
with (\emph{left}) $\om=(012)^\oo$ and (\emph{right}) $\om=(02112000\cdots)$.
When applied at any given vertex $v_k$ with $k\ne 1$, one generator creates a loop, and the other two 
lead to the next vertex $v_{k+1}$. The labels in the two columns differ from each other in the blue rows,
that is, at each $v_{4m}$ for $m\ge 1$ (they also differ at each $v_{4m+1}$, but that will
not concern us).}
\label{fig:rays}
\centerline{\begin{tabular}{|c|l|c|c|}
\hline
\multicolumn{4}{|c|}{First Grigorchuk group, $\om=(012)^\NN$} \\
\hline\hline
vertex $v_k$ & ray $\rho_k$ & loop & next labels\\
\hline\hline
1 & $\mathtt{1^\infty}$ & $b,c,d$ & $a$ \\
\hline
2 & $\mathtt{01^\infty}$ & $d$ & $b,c$ \\
\hline
3 & $\mathtt{001^\infty}$ & $d$ & $a$ \\
\hline
4 & $\mathtt{101^\infty}$ & $\ccl c$ &\ccl $b,d$ \\
\hline
5 & $\mathtt{1001^\infty}$ & $c$ & $a$ \\
\hline
6 & $\mathtt{0001^\infty}$ & $d$ & $b,c$ \\
\hline
7 & $\mathtt{0101^\infty}$ & $d$ & $a$ \\
\hline
8 & $\mathtt{1101^\infty}$ & \ccl $b$ &\ccl $c,d$ \\
\hline
9 & $\mathtt{11001^\infty}$ & $b$ & $a$ \\
\hline
10 & $\mathtt{01001^\infty}$ & $d$ & $b,c$ \\
\hline
11 & $\mathtt{00001^\infty}$ & $d$ & $a$ \\
\hline
12 & $\mathtt{10001^\infty}$ &\ccl $c$ &\ccl $b,d$ \\
\hline
13 & $\mathtt{10101^\infty}$ & $c$ & $a$ \\
\hline
14 & $\mathtt{00101^\infty}$ & $d$ & $b,c$ \\
\hline
15 & $\mathtt{01101^\infty}$ & $d$ & $a$ \\
\hline
16 & $\mathtt{11101^\infty}$ &\ccl $d$ &\ccl $b,c$ \\
\hline
17 & $\mathtt{111001^\infty}$ & $d$ & $a$ \\
\hline
18 & $\mathtt{011001^\infty}$ & $d$ & $b,c$ \\
\hline
19 & $\mathtt{001001^\infty}$ & $d$ & $a$ \\
\hline
20 & $\mathtt{101001^\infty}$ &\ccl $c$ &\ccl $b,d$ \\
\hline
21 & $\mathtt{100001^\infty}$ & $c$ & $a$ \\
\hline
22 & $\mathtt{000001^\infty}$ & $d$ & $b,c$ \\
\hline
23 & $\mathtt{010001^\infty}$ & $d$ & $a$ \\
\hline
24 & $\mathtt{110001^\infty}$ &\ccl $b$ &\ccl $c,d$ \\
\hline
25 & $\mathtt{110101^\infty}$ & $b$ & $a$ \\
\hline
\end{tabular}
\quad

\begin{tabular}{|c|c|}
\hline
\multicolumn{2}{|c|}{$\om=(02112000\cdots)$} \\
\hline \hline
loop & next labels\\
\hline\hline
 $b,c,d$ & $a$ \\
\hline
$d$ & $b,c$ \\
\hline
 $d$ & $a$ \\
\hline
\ccl $b$ &\ccl $c,d$ \\
\hline
 $b$ & $a$ \\
\hline
 $d$ & $b,c$ \\
\hline
 $d$ & $a$ \\
\hline
\ccl $c$ &\ccl $b,d$ \\
\hline
 $c$ & $a$ \\
\hline
 $d$ & $b,c$ \\
\hline
 $d$ & $a$ \\
\hline
\ccl $b$ &\ccl $c,d$ \\
\hline
 $b$ & $a$ \\
\hline
 $d$ & $b,c$ \\
\hline
 $d$ & $a$ \\
\hline
\ccl $c$ &\ccl $b,d$ \\
\hline
  $c$ & $a$ \\
\hline
 $d$ & $b,c$ \\
\hline
 $d$ & $a$ \\
\hline
\ccl $b$ &\ccl $c,d$ \\
\hline
$b$ & $a$ \\
\hline
 $d$ & $b,c$ \\
\hline
 $d$ & $a$ \\
\hline
\ccl $c$ &\ccl $b,d$ \\
\hline
 $c$ & $a$ \\
\hline
\end{tabular}}
\end{table}

Let $\om\in \Om$. Let $\de=\de(\om)\in\{b_\om,c_\om,d_\om\}$ be 
the unique element such that $\de_0=I$, and let $\beta$, $\sigma$ denote the other two generators.
According to \eqref{eq:012}, each $k\in\{0,1,2\}$ corresponds  to a $3$-vector $\ol k$,
and we let $\la_k$ be the generator whose element in $\ol k$ is $I$, and write
$\{\beta,\sigma,\de\}\setm \{\la_k\}=\{\mu_k, \tau_k\}$. 
Thus, 
\begin{equation}\label{eq:lambda}
\la_0=\delta, \q \{\la_1,\la_2\} =\{\beta,\sigma\}.
\end{equation}

The Schreier graph $\sS_\om$, illustrated in Figure \ref{fig:som}, is graph-isomorphic to 
$\sS$ but is labelled differently. The corresponding sequence of rays is identical to that of $\sS$, that is, as 
given in Table \ref{fig:rays}. The labels of $\sS_\om$ are as in Figure \ref{fig:som}, where two parallel unlabelled edges have distinct labels from $\{\beta,\sigma,\de\}$ and the incident loops have the
third label. The missing labels are assigned as follows.

Let $v_1,v_2,\dots$ be the vertices of $\sS_\om$ from left to right. Let $m\ge 1$ and consider $v_{4m}$
and the corresponding ray $\rho_{4m}$ of $T$ (see Table \ref{fig:rays}). 
The first letter of $\rho_{4m}$ is $1$, and 
there follows a sequence of $1$s followed by $0$; thus, we write $\rho_{4m} = 1^a0x$ where 
$a=a_m \ge 1$ and $x = x_m \in\{0,1\}^\oo$.
On following the prescription of Section \ref{sec:Grig}, we see that the loop 
of $\sS_\om$ at $v_{4m}$ is labelled
$\la_{\om_a}$, and the two rightward edges at $v_{4m}$ are labelled $\mu_{\om_a}$
and $\tau_{\om_a}$, \resp. The above construction may be described in diagrammatic form as
\begin{equation}\label{eq:X}
v_{4m} \larrow \rho_{4m} \larrow 1^a0x 
\larrow \om_a \larrow \la_{\om_a}.
\end{equation}

For given $\ell\in\{\beta,\sigma,\delta\}$, how frequently does $\ell$ appear as the label of the loop at some
 $v_{4m}$? The answer is implied by the next theorem. 
The indicator function of $A$ is written $1(A)$.  The set of finite words $w$ in the alphabet $\{0,1\}$ is denoted
$\{0,1\}^*$; the length of $w\in\{0,1\}^*$ is written $|w|$, and $\es$ 
denotes the empty word.
The word $w\in\{0,1\}^*$ is called \emph{periodic with period $p$}
if there exists $i=i(w)$ such that $\rho_{4m}$ begins with the sequence $1w$ if and only if $m=i+rp$
for some $r\ge 1$. 

\begin{theorem}\label{thm:label}
Each word $w\in\{0,1\}^*$ is periodic with period $2^{|w|}$.
In particular, the limit
\begin{equation*}
\pi_w:= \lim_{n\to\oo} \frac 1n \sum_{m=1}^n 1(\text{\rm$\rho_{4m}$ begins $1w$})
\end{equation*}
exists and satisfies
\begin{equation}\label{eq:new31}
\pi_w =  \frac1{2^{|w|}}.
\end{equation}
\end{theorem}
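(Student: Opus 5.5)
The plan is to reduce Theorem \ref{thm:label} to a self-similarity of the whole sequence of rays $(\rho_j)_{j\ge1}$ of Table \ref{fig:rays}, which, as noted above, is the same for every $\om$. For a ray $\rho=x\rho'$ with $x\in\{0,1\}$, write $\rho'=:\mathrm{tail}(\rho)$. I would first establish two structural facts, valid for all $j\ge1$:
\begin{letlist}
\item the first letter of $\rho_j$ is $1$ if $j\equiv 0,1 \pmod 4$ and $0$ if $j\equiv 2,3\pmod 4$;
\item $\mathrm{tail}(\rho_j)=\rho_{\lceil j/2\rceil}$.
\end{letlist}
Both can be read off the first rows of Table \ref{fig:rays}: for example, the tails of $\rho_4,\rho_5,\rho_8,\rho_9,\dots$ are $01^\oo,001^\oo,101^\oo,1001^\oo,\dots$, which is $\rho_2,\rho_3,\rho_4,\rho_5,\dots$.

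I would prove (a) and (b) together by induction on $j$ along the path $v_1,v_2,\dots$. The edges of $\sS_\om$ alternate between $a$-edges, joining $v_{2i-1}$ to $v_{2i}$, and pairs of parallel non-$a$ edges, joining $v_{2i}$ to $v_{2i+1}$.
\begin{itemize}
\item An $a$-edge flips the first letter and fixes the tail. This gives the step $2i-1\to 2i$ for (b), since $\lceil (2i-1)/2\rceil=\lceil 2i/2\rceil$.
\item A non-$a$ generator $g\in\{b_\om,c_\om,d_\om\}$ fixes the first letter of any ray beginning with $1$. On a ray beginning with $0$, it acts on the tail as $a$ or as the identity, according as $g_0=\Pi$ or $I$. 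Since $\rho_{2i}$ and $\rho_{2i+1}$ are distinct, the non-loop labels must act as $a$ on the tail, or as the first-level shift of the generator when the first letter is $1$.
\end{itemize}
The point to check is that this shifted action on tails reproduces exactly one step of the original path. It does so because the tails run along the Schreier graph of the shifted sequence $(\om_1,\om_2,\dots)$, which is graph-isomorphic to $\sS$. This yields (b), and (a) then follows by tracking when the first letter flips.

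The theorem follows by induction on $|w|$. I would prove the stronger statement that, for each $w$, the set $J_w:=\{j:\rho_j \text{ begins } w\}$ is a union of exactly two residue classes modulo $2^{|w|+1}$.
\begin{itemize}
\item For $|w|=0$ this is trivial.
\item For $|w|=1$ it is fact (a).
\item For the step, write $w=xw'$. By (b), $j\in J_w$ if and only if $j\in J_x$ and $\lceil j/2\rceil\in J_{w'}$. The second condition pulls two classes mod $2^{|w|}$ back to four classes mod $2^{|w|+1}$, and intersecting with the two classes of $J_x$ mod $4$ leaves two classes. This last count is a short congruence computation, using that $\lceil j/2\rceil$ for $j\equiv0,1\pmod4$ takes each value in the appropriate class exactly once.
\end{itemize}
Finally, $\rho_{4m}$ begins $1w$ if and only if $\rho_{2m}$ begins $w$, by (a) and (b). Exactly one of the two classes of $J_w$ modulo $2^{|w|+1}$ is even, because of the pairing $\{2i,2i+1\}$ inherited from (b), and halving it gives a single class of $m$ modulo $2^{|w|}$. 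This yields periodicity with period $2^{|w|}$ and hence $\pi_w=2^{-|w|}$.

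The main obstacle is the rigorous proof of (b) for general $\om$. One must confirm that the non-loop labels between $v_{2i}$ and $v_{2i+1}$ always induce on tails precisely the next move of the path, and this requires the standard inductive description of the level-$n$ Schreier graphs as paths that visit all $2^n$ prefixes. I would first try to prove the finite version, namely that $v_1,\dots,v_{2^n}$ list all words of length $n$ with a fixed $1^\oo$ tail, in a reflected-Gray-code order. I would then derive (b) from this reflection structure, which is independent of $\om$.
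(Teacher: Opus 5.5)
Your route differs from the paper's. The paper never uses the tail map. It works with the stage map $\rho_{4m}\mapsto\rho_{4m+4}$, which flips the second letter and the letter following the first $0$, so the $(a+2)$th letter changes exactly when the current $(a+1)$-prefix is $1^a0$. A doubling-of-tabulations induction then shows that the $(a+1)$-prefixes of the $\rho_{4m}$ run through every $1w$ with $|w|=a$ exactly once per $2^a$ stages.

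You instead prove the self-similarity $\rho_j=x_j\,\rho_{\lceil j/2\rceil}$ and reduce everything to congruences. Your facts (a) and (b) are correct and agree with the table. Your approach yields more: explicit residue classes for every $j$, not only multiples of $4$. That gives at once the paper's later remark that the whole sequence $(\rho_k)$ equidistributes on $\pd T$, and it identifies $(\rho_j)$ as a reflected Gray code. The cost is that you need the global statement (b), whereas the paper needs only the local four-move computation within one stage.

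One step does not go through as written. The claim that exactly one of the two classes of $J_w$ modulo $2^{|w|+1}$ is even is essential; without it the density could be $0$ or $2^{1-|w|}$. But the pairing $\{2i,2i+1\}$ does not yield it. That pairing comes from (a), not (b), and $J_w$ is in general not of that form: for example, $J_{01}$ is $\{2,7\}$ modulo $8$. The repair sits inside your own induction. The map $j\mapsto\lceil j/2\rceil$ is a bijection from $J_x$ onto $\NN$, with inverse $\psi_x$ given by:
\begin{itemize}
\item $\psi_1(c)=2c$ if $c$ is even and $2c-1$ if $c$ is odd;
\item $\psi_0(c)=2c-1$ if $c$ is even and $2c$ if $c$ is odd.
\end{itemize}
Hence $J_{xw'}=\psi_x(J_{w'})$, where $\psi_1$ preserves parity and $\psi_0$ reverses it. Since $J_\es$ has one even and one odd class modulo $2$, induction gives the same for every $J_w$. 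Alternatively, check that the two classes always sum to $1$ modulo $2^{|w|+1}$; this is the reflection symmetry you allude to.

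The obstacle you flag in (b) also closes without finite Schreier graphs. Put $\tau_i:=\mathrm{tail}(\rho_{2i-1})=\mathrm{tail}(\rho_{2i})$ and $\om'=(\om_1,\om_2,\dots)$.
\begin{itemize}
\item By your own observation on non-$a$ generators, $\tau_1=1^\oo$, and $\tau_{i+1}\neq\tau_i$ is a neighbour of $\tau_i$ in $\sS_{\om'}$.
\item The $\tau_i$ are pairwise distinct. A ray has only two preimages under the tail map, and the distinct rays $\rho_{2i-1},\rho_{2i}$ already use both.
\item Hence $(\tau_i)$ is a self-avoiding walk from the end of a one-ended path, so $\tau_i$ is the $i$th vertex of $\sS_{\om'}$.
\item That vertex is $\rho_i$, because the unlabelled Schreier graph is independent of $\om$. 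The non-$a$ neighbour of $1^k0xy$ is always $1^k0\bar{x}y$, since exactly two of the three non-$a$ generators act as $\Pi$ at level $k$.
\end{itemize}
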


Theorem \ref{thm:label} may be expressed as saying 
that the empirical distribution of the rays $(\rho_{4m})$ converges 
weakly to the uniform distribution on the part of the boundary $\pd T$ that begins $1$. 
By consideration of the actions of the generators
of $\Ga_\om$ on the rays, this weak limit extends to the full sequence $(\rho_k)$
with the limit  being uniform on $\pd T$. 

\begin{corollary}\label{cor:1}
Let $\ell\in\{\beta,\sigma,\delta\}$. The limit
\begin{equation*}
\pi_\ell:= \lim_{n\to\oo} \frac 1n \sum_{m=1}^n 1(\text{\rm the loop at $v_{4m}$ is labelled $\ell$})
\end{equation*}
exists and satisfies
\begin{equation*}
\pi_\ell =  \sum_{a=1}^\oo  \frac1{2^{a}} 1(\la_{\om_a}=\ell) .
\end{equation*}
\end{corollary}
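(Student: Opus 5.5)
Corollary \ref{cor:1} follows from Theorem \ref{thm:label} combined with the label prescription \eqref{eq:X}. First, fix $m\ge1$ and write $\rho_{4m}=1^{a}0x$ with $a=a_m\ge1$. By \eqref{eq:X}, the loop at $v_{4m}$ is labelled $\la_{\om_a}$. This label depends only on the integer $a_m$, that is, on the length of the initial run of $1$s in $\rho_{4m}$. Hence, for each $a\ge1$,
\begin{equation*}
1(\text{$\rho_{4m}$ begins $1w_a$}) = 1(a_m=a), \qquad w_a:=1^{a-1}0,
\end{equation*}
and so
\begin{equation*}
1(\text{the loop at $v_{4m}$ is labelled $\ell$}) = \sum_{a=1}^\oo 1(a_m=a)\,1(\la_{\om_a}=\ell).
\end{equation*}
The sum has exactly one nonzero term, because the events $\{a_m=a\}$, $a\ge1$, are disjoint and exhaust all possibilities. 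They are exhaustive since $\rho_{4m}$ begins with $1$ and, lying in the orbit of $1^\oo$ but not equal to it, contains some $0$.

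Next, average over $m=1,\dots,n$. For each fixed $a$, Theorem \ref{thm:label} applied to $w_a$, which has $|w_a|=a$, gives $\frac1n\sum_{m\le n}1(a_m=a)\to 2^{-a}$. It remains to interchange the limit $n\to\oo$ with the infinite sum over $a$; this is the only point needing care. I would handle it by truncation. Split the sum at $a\le A$ and $a>A$. The finite part converges to $\sum_{a\le A}2^{-a}1(\la_{\om_a}=\ell)$. The tail is bounded by
\begin{equation*}
\frac1n\sum_{m\le n}1(a_m>A)=\frac1n\sum_{m\le n}1(\text{$\rho_{4m}$ begins $1\cdot1^{A}$}),
\end{equation*}
which by Theorem \ref{thm:label} with $w=1^A$ converges to $2^{-A}$. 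Therefore the $\limsup$ and $\liminf$ of the averages differ from the truncated sum by at most $2^{-A}$. Letting $A\to\oo$ shows that the limit $\pi_\ell$ exists and equals $\sum_{a\ge1}2^{-a}1(\la_{\om_a}=\ell)$.

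The main obstacle is this interchange of limit and sum, and the tail bound resolves it because Theorem \ref{thm:label} also applies to the words $1^A$. As a consistency check, the values $\pi_\ell$ over the three labels $\ell\in\{\beta,\sigma,\delta\}$ sum to $\sum_a 2^{-a}=1$, as they must.
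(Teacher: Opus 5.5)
Your argument is correct and follows the paper's route: you decompose according to the length $a_m$ of the initial run of $1$s in $\rho_{4m}$, use \eqref{eq:X} to see that the loop label is $\la_{\om_{a_m}}$, and apply Theorem \ref{thm:label} to the words $1^{a-1}0$. The paper simply asserts the interchange of the limit and the infinite sum, whereas your truncation, with the tail bound from Theorem \ref{thm:label} applied to $1^{A}$, actually justifies it; one small wording slip is that the sum over $a$ has at most (not exactly) one nonzero term, which does not affect the identity.
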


If, for example,  $\om$ is a vector of independent random variables, each being uniformly distributed
on $\{0,1,2\}$, the proportion of appearances of the loop-label $\ell$ may be written
as $Z=\sum_{a=1}^\oo X_a/2^{a}$ where the $X_a$ are independent with the Bernoulli distribution with parameter $\frac13$.

\begin{remark}\label{remL6}
It is likely that the above and perhaps more may be (or have been)
obtained using the theory of symbolic dynamics developed in \cite{AB, GNSL,GNS,VY} and elsewhere, but we prefer
to proceed directly here.
\end{remark}

\begin{proof}[Proof of Theorem \ref{thm:label}]
The mapping from the vertex-set of $\sS_\om$ into
the set $\pd T$ of infinite rays is an injection (see, for example, \cite[Thm 7.3]{Grig-S}, \cite[p.\ 29]{VY})
and is illustrated in Table \ref{fig:rays}. Note that this mapping is
independent of the choice of $\om$. The initial ray (that is, the image of the root of $\sS_\om$) is the ray $\rho_1=1^\oo$. We shall consider the evolution of the sequence $(\rho_k)$ of rays as $k$ grows. It
is convenient to concentrate on suffices $k$ that are multiples of $4$, and we call the evolution
from any $\rho_{4m}$ to $\rho_{4(m+1)}$ a \emph{stage} of the process.

It is immediate that the first letter of every $\rho_{4m}$ is $1$. Next we perform an illustration
of the required argument.

Suppose $m$ is such that
$v_{4m}= 11x$ for some $x\in \{0,1\}^\oo$, $x\ne 1^\oo$. The four moves that constitute the next stage
are as follows.
\begin{numlist}
\item The first letter following the first $0$ is flipped, resulting in $11x'$ for appropriate $x'$.
\item The first letter of the ensuing word is flipped, giving $01x'$.
\item The second letter of the ensuing word is flipped, giving $00x'$.
\item The first letter of the ensuing word is flipped, giving $10x'$.
\end{numlist}
This sequence results in the word  $\rho_{4m+4}=10x'$. We express this by writing
\begin{equation}\label{eq:11}
11 \larrow 10.
\end{equation}
On applying the same argument to the word $10y$ with $y\in\{0,1\}^\oo$, we find that
$10 \larrow 11$, and thus we may write
\begin{equation}\label{eq:12}
11 \larrow 10 \larrow 11.
\end{equation}
By \eqref{eq:11}--\eqref{eq:12}, the words $0$ and $1$ are periodic with period $2$.
Hence, $\pi_0=\pi_1=\frac12$ in agreement with the claim of the theorem. 

The corresponding relations with three letters are
\begin{equation}\label{eq:12a}
111\larrow 101 \larrow 110 \larrow 100\larrow 111,
\end{equation}
so that the limits $\pi_{11}$, $\pi_{01}$, $\pi_{10}$, $\pi_{00}$ exist and each equals $\frac14$.

It is instructive to combine \eqref{eq:12} and \eqref{eq:12a} in a vertical table. Table \ref{tab:v1}
illustrates the nested structure of the relations \eqref{eq:12}--\eqref{eq:12a}. Some comments follow.
\begin{letlist}
\item Let $a\ge 1$, and suppose we have tabulated the evolution of the initial $(a+1)$-subsequences 
of the rays, starting at $1^{a+1}$, and continuing until the first reappearance of $1^{a+1}$.
\item Let $A_a$ be the statement  that, for every $w\ne 1^a$ with $|w|=a$, the word $1w$ appears in
this tabulation exactly once. (The word $1\cdot 1^a$ appears twice, once at each end.)
Table \ref{tab:v1} includes such tabulations for $a=1,2$, and the assumption is satisfied for these values..
\item
\emph{Assume $A_a$ holds.}
Take two copies of this tabulation, the second of which is appended beneath the first. We add 
the appropriate $(a+2)$th column, noting that it starts with its letter $1$, and that that it 
changes its value each time it passes the word $1^a0$ in the left $a+1$ columns, and not elsewhere. 
\item The word $1^a0$ occurs exactly once in each of the two tabulations, and at the same position.
\item It follows by $A_a$ that $A_{a+1}$ holds. Since $A_1$ holds, every $A_a$ holds by induction. 
\end{letlist}

\begin{table}[h]
\caption{\emph{Left}: Two successive copies of the display \eqref{eq:12} are displayed vertically
on the left side, followed in the third column by the extensions given in \eqref{eq:12a}. 
As one descends the third column, there are exactly two changes in value, at the cells coloured blue,
each corresponding to the previous word 10 (coloured pink) in the two left columns. \emph{Right}: 
Two copies of \eqref{eq:12a} displayed vertically. The two changes in the fourth column correspond to the occurrences of 110 in the previous row. }\label{tab:v1}
\centerline{\begin{tabular}{cc|c}
1 & 1 & 1\\
\hline\hline
\cclr 1 &\cclr  0 & 1 \\
\hline
1 & 1 & \ccl 0 \\
\hline\hline
\cclr 1 &\cclr 0 & 0 \\
\hline
1 & 1 & \ccl 1 \\
\hline
\end{tabular}
\qquad\qquad
\begin{tabular}{ccc|c}
1 & 1 & 1 & 1\\
\hline\hline
1 & 0 & 1 & 1\\
\hline
\cclr1 &\cclr 1 &\cclr 0 & 1\\
\hline
1 & 0 & 0 &\ccl 0\\
\hline
1 & 1 & 1 & 0\\
\hline\hline
1 & 0 & 1 & 0\\
\hline
\cclr 1 &\cclr 1 &\cclr 0 & 0\\
\hline
1 & 0 & 0 & \ccl 1\\
\hline
1 & 1 & 1 & 1\\
\hline
\end{tabular}}
\end{table}

In conclusion, for every $a\ge 1$, the evolution beginning $1^{a+1}$, until its first reappearance,
includes every $1w$ with $|w|=a$, $w\ne 1^a$, exactly once. That is to say, the evolution
\begin{equation}\label{eq:13}
1^{a+1} \larrow 101^{a-1} \larrow 1101^{a-2}\larrow10^31^{a-3}\larrow  \cdots \larrow 1001^{a-2} \larrow 1^{a+1} ,
\end{equation}
includes a unique copy of any word of the form $1w$ with $|w|=a$, $w\ne 1^a$.
Therefore, such $w$ has period $2^{|a|}$, whence the limit $\pi_w$ exists for all $w$ and satisfies \eqref{eq:new31}.
\end{proof}

\begin{proof}[Proof of Corollary \ref{cor:1}]
Let $a\ge 1$, $\ell\in\{\beta,\sigma,\delta\}$, and let $h_m(a,\ell)$ be the indicator function that 
$a_m=a$ and in addition $\la_{\om_a}=\ell$. (Recall \eqref{eq:lambda} and the notation
leading to \eqref{eq:X}.)
Then,
\begin{align*}
\pi_\ell = \lim_{n\to\oo} \frac 1n \sum_{m=1}^n \sum_{a=1}^\oo h_m(a,\ell),
\end{align*}
and the claim holds by Theorem \ref{thm:label} on interchanging the summations.
\end{proof}

\section{Proof of Theorem \ref{thm:Grig2}}\label{sec:pfmain}

The proof is basically the same whatever the choice of $\om$, and therefore the reader loses little by
taking $\om=(012)^\oo$ and $b_\om=b=\beta$, $c_\om=c=\sigma$, $d_\om=d=\delta$.
For completeness we shall include the details even in places where  they resemble the proof of \cite[Thm 3.1]{GL20}. 

Throughout this proof we assume $\om\in\Om$, and we let $\beta$, $\sigma$, $\de$ be given as in the statement
of the theorem, that is, $\de=\de(\om)\in\{b_\om,c_\om,d_\om\}$ is such that $\de_0=I$, and 
$\beta$, $\sigma$ are the two remaining elements of $\{b_\om,c_\om,d_\om\}$. 

\subsection{Case (a)}\label{ssec:G}

Let $\sW_0$ be the set of finite, labelled walks on the
Schreier graph $\sS_\om$ starting at the root $1^\oo$ that, at each step, either move one
step rightwards or pass around a loop (no loop may be traversed more than once).
Members of $\sW_0$ may be considered as finite words 
in the alphabet $\{a,\beta,\sigma,\de\}$ without consecutive repetitions. 
Walks in $\sW_0$  generally contain loops and are therefore not self-avoiding on $\sS_\om$, but we shall
see next that they give rise to a certain set $\ol{\sW_0}$
of self-avoiding walks on $G_\om$ starting at its root $\id$.

Let $\sW_0(\oo)$ be the set of \emph{infinite} words constructed from $\sS_\om$ subject to the rules used in defining $\sW_0$,
and let $\ol {\sW_0}(\oo)$ be the set of walks on $G_\om$ obtained as lifts of elements of $\sW_0(\oo)$.

Recall the set $\sK$ of kites.
Let $w=w_0w_1\cdots \in \sW_0(\oo)$ and $\kappa\in\sK$. Write
$w^n=w_0w_1\cdots w_n$, a walk/word on $\sS_\om$ that lifts to the walk $\ol{w^n}$ on $G_\om$.
We say that $\ol w$ \emph{enters} $\kappa$
at epoch $n$ if either $n=0$ and $\ol{w^0}\in \kappa$, or $n\ge 1$, $\ol{w^n}\in\kappa$, $\ol{w^{n-1}}\notin\kappa$.

\begin{lemma}\label{lem:0}\mbox{\hfil}
\begin{letlist}
\item
Every $w\in\sW_0(\oo)$ lifts to an infinite SAW $\ol w$ in $G_\om$ from the root $\id$.

\item
For $w\in \sW_0(\oo)$ and $\kappa\in\sK$, the infinite SAW $\ol w = (w_1,w_2,\dots)$ does not enter $\kappa$ 
more than once.

\item
Every $\ol w\in \ol{\sW_0}$ is the initial sub-walk of an infinite SAW 
lying in $\ol{\sW_0}(\oo)$.
\end{letlist}
\end{lemma}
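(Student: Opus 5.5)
The plan is to exploit the map from $G_\om$ to the Schreier graph $\sS_\om$ given by $g\mapsto g^{-1}\rho$, or more precisely the correspondence between words and walks. A word $w=w_0w_1\cdots$ in the alphabet $\{a,\beta,\sigma,\de\}$ lifts to the walk on $G_\om$ whose $n$th vertex is the group element $g_n=w_0w_1\cdots w_n$ (right multiplication, so that the $n$th step follows the edge labelled $w_n$). The same word, read as acting on the ray $1^\oo$ in the order that produces the walk on $\sS_\om$, gives the vertex $\pi(g_n)$ of $\sS_\om$ visited at step $n$. The map $\pi$ is the projection of the Cayley graph onto the Schreier graph; it is a graph homomorphism that preserves labels. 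The key point is that two distinct times $m<n$ with $g_m=g_n$ would force $\pi(g_m)=\pi(g_n)$.

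To prove (a), I would first use the structure of walks in $\sW_0(\oo)$. Such a walk only moves rightwards or around loops, so its projected position in $\sS_\om$, namely its index $k$ with $v_k$, is non-decreasing. It strictly increases at every step except loop steps. Moreover, at any vertex at most one loop is traversed before moving right, since consecutive letters are distinct and no loop is repeated. At $v_1$ there are three loops, but they carry distinct labels from the Klein group. Consecutive loop traversals at $v_1$, such as $\beta\sigma$, do not repeat any letter consecutively, so they produce distinct group elements: their partial products $\id,\beta,\beta\sigma=\de$ are distinct within the kite.

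So if $g_m=g_n$ with $m<n$, then $\pi(g_m)=\pi(g_n)$, so the times $m,\dots,n$ lie in a block of steps all spent at a single vertex $v_k$ using only loop edges. These loop letters all belong to $\{\beta,\sigma,\de\}$. Their product $w_{m+1}\cdots w_n$ is a word in the Klein $4$-group with no consecutive repetition. At $v_k$ with $k\ge2$ there is only one loop, so the block has length at most one and the product is a single non-identity generator. At $v_1$ the block has length at most $3$ with distinct letters, and a product of one, two or three distinct non-identity Klein elements is never $\id$ except for the product of all three. That case occurs only for a length-$3$ block from $g_m$, and since the block at $v_1$ then consists of all four vertices of the kite, I would check directly that the intermediate vertices $\id,\beta,\beta\sigma$ and the endpoint $\beta\sigma\de$ are not all distinct. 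If they are not, such walks must be excluded from $\sW_0$ by the no-repeat-loop rule; here I would verify carefully against the definition, since $\beta\sigma\de=\id$ would be a genuine return. I expect this bookkeeping at the root to be the main obstacle.

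For (b), kites are the left cosets $g\{\id,\beta,\sigma,\de\}$, so $g_m$ and $g_n$ lie in the same kite iff $g_m^{-1}g_n$ is a Klein element. By the projection argument, all $\{\beta,\sigma,\de\}$-edges between $m$ and $n$ either are loops or are rightward moves in $\sS_\om$. Any $a$-step strictly between them changes the coset, and once the walk leaves a kite via an $a$-edge it has moved strictly rightwards. Returning to that kite would require $g_n\in g_m K$, which would give $\pi(g_n)\in\{\pi(g_m)\}\cup$ the Klein-neighbours of $\pi(g_m)$, all of which lie at most one step from $\pi(g_m)$ and to its left. This is impossible after an intervening $a$-step rightwards. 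Finally, (c) is immediate: any finite word in $\sW_0$ extends by moving rightwards forever, which is always permitted, to an element of $\sW_0(\oo)$, and then (a) applies.
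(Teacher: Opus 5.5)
Your strategy is the paper's: project the lifted walk to $\sS_\om$ via $g\mapsto g^{-1}1^\oo$. There, walks in $\sW_0$ move monotonically rightwards apart from loop steps, so a repeated vertex or a re-entered kite would force an impossible closed walk. The case you left open at the root, however, is not bookkeeping, and your proposed resolution is wrong. The rule defining $\sW_0$ forbids only traversing the \emph{same} loop twice. So a word beginning $\beta\sigma\de a$ (each of the three loops at $1^\oo$ used once, then a step right) lies in $\sW_0$ and extends into $\sW_0(\oo)$. Its lift visits $\id,\ \beta,\ \beta\sigma=\de,\ \beta\sigma\de=\id$, which is a genuine return to the root; it meets only three distinct vertices of the kite, not four. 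Hence parts (a) and (c), as literally stated, fail for every word whose initial block of root loops uses all three of $\beta,\sigma,\de$, in any order, and no argument can close your open case. The paper's proof has the same hole. Its step ``$\sS_\om$ contains a cycle of length $3$ or more, a contradiction by inspection'' ignores the closed walk of length $3$ formed by the three loops at $1^\oo$.

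The repair is to amend the definition of $\sW_0$ so that at most two of the three root loops may be used. This alters only the prefactor $A_0(\xi)$ in \eqref{eq:gf2}, so the lower bound in Theorem~\ref{thm:Grig2}(a) is untouched. No change is needed for $\sS_\om(\neg z)$, which has only two loops at the root. With the amendment your argument for (a) is complete, and more explicit than the paper's. A repeat $g_m=g_n$ forces a block of loop steps at a single $v_k$ of length at most $1$ (if $k\ge 2$) or $2$ (if $k=1$), and a product of one or two distinct elements of $\{\beta,\sigma,\de\}$ is never $\id$.

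In (b), your claim that the Klein neighbours of $\pi(g_m)$ lie ``to its left'' holds only when $g_m$ is the last vertex of the kite before the departing $a$-step. Then the $a$-edge at $\pi(g_m)$ points right, so the non-loop $\{\beta,\sigma,\de\}$-edges there point left (and at $v_1$ there are none). For an earlier vertex of the same visit the Klein neighbour can lie to the right, so you should fix that choice of $g_m$. Part (c) then follows as you say, since a rightward step never repeats the preceding letter.
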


\begin{proof}
(a) Let $w\in\sW_0(\oo)$ and suppose $\ol w$ is
not a SAW. Then $w$ contains some shortest subword  $s$ of length $3$ or more
satisfying $s=\id$. On considering the action of $\Ga_\om$ on $\sS_\om$
(see Figure \ref{fig:som}), we deduce
that $\sS_\om$ contains a cycle of length $3$ or more. By inspection of $\sS_\om$, this is 
seen to be a contradiction.

(b) Suppose $\ol w\in \ol{\sW_0}(\oo)$ enters some $\kappa\in\sK$ twice (or more often).
In the order of traversal of  $\ol w$, write $v_1$ for the first 
departure point from $\kappa$, and $v_2$ for the first subsequent 
arrival point in $\kappa$.
Then $v_1$, $v_2$ are at least distance $4$ apart in $\ol w$.  
Since $v_1,v_2\in \kappa$, there exists a generator $z$ ($\ne a$) such that $v_2=v_1z$, whence $\sS_\omega$ contains 
some cycle of length $4$ or more, a contradiction. 

(c) This holds since every word in $\sW_0$ may be extended to an infinite word in $\sW_0(\oo)$.
\end{proof}

Let $R=\{r_1, r_2,\dots\}$ be the set of right-hand 
endpoints of the $a$-edges of $\sS_\om$,
labelled in the order they are encountered
when moving to the right from $1^\oo$ (in the sense of Figure \ref{fig:som}). An ordered pair of elements $r_i,r_j\in R$ is called
\emph{consecutive} if $|i-j|=1$. Let $\sW$ be the subset of $\sW_0$
containing words that end in $a$. 
As above, $\sW$ lifts to a set $\ol\sW$ of SAWs on $G_\om$.

We think of the set $R$ as being points of renewal of walks in $\sW$.
More specifically, each $w\in \sW$ can be broken into sections  (called \emph{units})
beginning and ending (\resp) with a  consecutive pair $z, z'\in R$,
and each  unit $\si$ has the form of Figure \ref{fig:unit1}.

\begin{figure}[htbp]
\centerline{\includegraphics*[width=0.3\hsize]{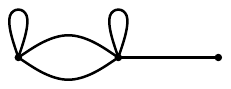}}
   \caption{A unit of the Schreier graph $\sS_\om$ without its labels. 
   The counts (according to length) of SAWs from the left end to the right end of the unit
   have generating function $A(\xi)=2\xi^2+4\xi^3+2\xi^4$.}
      \label{fig:unit1}
\end{figure}

The generating function 
\begin{equation}\label{eq:gf1}
Z(\xi)=\sum_{w\in\ol{\sW}} \xi^{|w|}
\end{equation}
 of $\ol{\sW}$ may be expressed in the form
\begin{equation}\label{eq:gf2}
Z(\xi) = A_0(\xi)\sum_{n=0}^\oo A(\xi)^n,
\end{equation}
where $A_0(\xi)$ is the generating function of walks from $1^\oo$ to 
$r_1$, and $A$ is given in the caption of Figure \ref{fig:unit1}.
Since $Z(\xi)\le Z_{G_\om}(\xi)$ (for $\xi\ge 0$) and $Z_{G_\om}$ has radius of convergence $1/\mu(G_\om)$
(recall \eqref{eq:ZG}), the radius of convergence of $Z$ is at least $1/\mu(G_\om)$, 
whence $\mu(G_\om)\ge \zeta$ where $1/\zeta$ 
is the positive root of the equation $A(\xi)=1$. This provides the lower bound of part (a).

Turning to  the upper bound on $\mu(G_\om)$, one may use the fact that 
$G_\om$ has degree $4$ and girth $3$, and apply  \cite[Thm 7.3]{GL20}. An improved upper bound
is obtained by considering the Cayley graph $G'$ of the group $\Ga'$
obtained from $\Ga_\om$ by removing all relators except \eqref{eq:commute}.
This $G'$ is isomorphic to the free product  graph $K_2 * K_4$ which, by \cite[Thm 3.3]{Gilch},
has connective constant $\mu'$ given as the positive root of $\xi^4-3\xi^2-6\xi-6=0$.
(Here, $K_m$ denotes the complete graph on $m$ vertices.)
By \cite[Cor.\ 4.1]{GL14}, we have the strict inequality $\mu(G_\om) < \mu'$, as claimed.
 
\subsection{Cases (b, c)}\label{ssec:caseb}

We follow the proof of part (a) wherever possible. Let $z\in \{\beta,\sigma,\de\}$.
The orbital Schreier graph $\sS_\om(\neg z)$ of $\Ga_\om$
is obtained from $\sS_\om$ by deleting all elements labelled $z$; see Figure \ref{fig:Sch-db}
for an illustration of the cases $z=\beta,\de$.  

Let $\sW_0(\neg z)$ be the set of finite, labelled walks on $\sS_\om(\neg z)$ 
starting at the root $1^\oo$ that, at each step, either move one
step rightwards or pass around a loop.
We view $\sW_0$ as a set of finite words 
in the alphabet $\{a,\beta,\sigma,\de\}\setm \{z\}$ without consecutive repetitions. 

\begin{figure}[t]
\centerline{\raisebox{20pt}{$\sS_\om(\neg \de)$:}\hfil\includegraphics*[width=0.74\hsize]{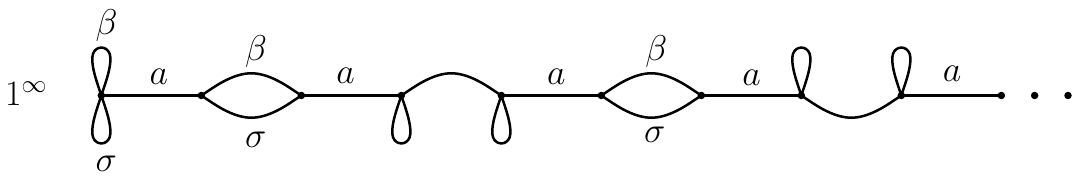}}
\centerline{\raisebox{20pt}{$\sS_\om(\neg \beta)$:}\hfil\includegraphics*[width=0.74\hsize]{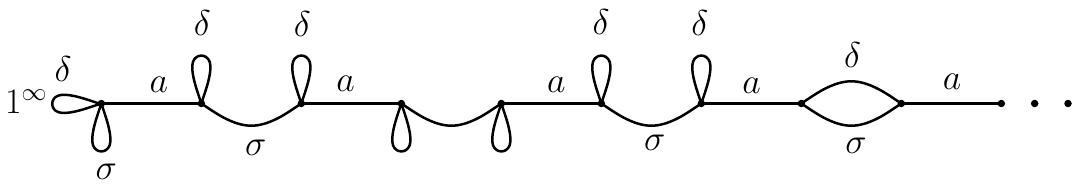}}
   \caption{The one-ended orbital Schreier graphs $\sS_\om(\neg \de)$ and $\sS_\om(\neg \beta)$
   of the ray $1^\oo$ for the Grigorchuk group $\Ga_\om$ with respective generator sets  $\{a,\beta,\sigma\}$
   and $\{a,\sigma,\de\}$. The Schreier graph $\sS_\om(\neg \sigma)$ is similar to $\sS_\om(\neg \beta)$.}
      \label{fig:Sch-db}
\end{figure}

As in part (a), each  $w \in \sW_0(\neg z)$ lifts to a distinct walk $\ol w$ on $G_\om(\neg z)$, and 
furthermore Lemma \ref{lem:0}
holds. We now examine the units of the Schreier graph $\sS_\om(\neg z)$, beginning with the case  $z=\de$.

\subsubsection{The case $z=\de$}\label{ssec:d}

Let $\sW$ be the subset of $\sW_0(\neg \de)$
containing words that end in $a$. 
As above, $\sW$ lifts to a set $\ol\sW$ of SAWs on $G$.
It turns out that $\sW$ is not sufficiently large to obtain an adequate lower bound for $\mu(G_\om(\neg \de))$,
and therefore we shall augment $\sW$ to a larger set $\sW'$ of words. This will be done
by considering the individual units of $\sS_\om(\neg \de)$. The unit
between $r_i$ and $r_{i+1}$
is called \emph{odd} (\resp, \emph{even})
if $i$ is odd (\resp, even). It may be seen that every odd unit 
is a copy of the unit $u_1$ of Figure \ref{fig:units}, and every even unit either is a copy of $u_1$ or 
contains a copy of $u_2$
(of the same figure).

\begin{figure}[htbp]
\centerline{\includegraphics*[width=0.7\hsize]{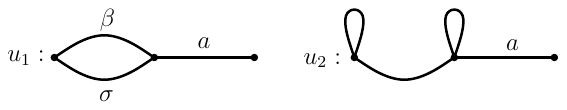}}
   \caption{Two units contributing to $\sS_\om(\neg \de)$, denoted $u_1$ (left) and $u_2$ (right).
   The omitted labels in $u_2$ depend on the position of the unit in the Schreier graph.}
      \label{fig:units}
\end{figure}

Let $\xi\ge 0$. As in \eqref{eq:gf1}--\eqref{eq:gf2}, the generating function 
\begin{equation}\label{eq:gf3}
Z(\xi)=\sum_{\ol w\in\ol{\sW}} \xi^{|\ol w|}
\end{equation}
 of $\ol{\sW}$ satisfies (for $\xi\ge 0$)
\begin{equation}\label{eq:gf4}
Z(\xi) \ge A_0(\xi)\sum_{n=0}^\oo \prod_{i=1}^n A_i(\xi),
\end{equation}
where $A_0(\xi)$ is the generating function of walks from $1^\oo$ to $r_1$, and
\begin{equation*}
A_i(\xi) = \begin{cases}
2\xi^2 &\text{when $i$ is odd},\\
\text{either $2\xi^2$ or $\xi^2+2\xi^3 +\xi^4$}&\text{when $i$ is even.}
 \end{cases}
\end{equation*}
This can be improved by augmenting the set of walks in copies of the unit $u_1$.

Walks from left to right in $u_1$ correspond to the set $L:=\{\beta a,\sigma a\}$ of words. We augment this set to the set
$L'=\{\beta a,\sigma a,\beta \sigma\beta a,\sigma\beta\sigma a\}$. This has the effect of replacing $A_i(x)$, for odd $i$, by
$A'(\xi):= 2\xi^2 + 2\xi^4$. Note that
\begin{equation}\label{eq:new1}
A'(\xi) \ge \xi^2+2\xi^3 +\xi^4.
\end{equation}

Let $\sW'$ be the superset of $\sW$ (viewed as sets of words
in the alphabet $\{a,\beta,\sigma\}$) comprising words ending in $R$, without consecutive repetitions,
with the set of walks in any copy of $u_1$ (whatever its position in the Schreier graph)
replaced by the walks corresponding to $L'$.
Let $\ol{\sW'}$ be the set of walks on $G_\om(\neg \de)$ obtained as lifts of elements of $\sW'$.
As above, each $w' \in \sW'$ lifts to a distinct walk $\ol {w'}\in \ol{\sW'}$.

\begin{lemma}\label{eq:new901}
Every $\ol {w'} \in \ol{\sW'}$ is a SAW on $G_\om$. 
\end{lemma}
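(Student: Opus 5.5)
We must show that every walk $\ol{w'}$ obtained by lifting a word $w'\in\sW'$ is self-avoiding on $G_\om(\neg\de)$. The argument follows the pattern of Lemma \ref{lem:0}: a self-intersection of a lifted walk would give a closed subword, and a closed subword acting on $\sS_\om(\neg\de)$ would have to trace an impossible closed route.

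\textbf{Step 1 (reduction to a relator).} Suppose $\ol{w'}$ is not a SAW. Then there exist $i<j$ with $\ol{w'^i}=\ol{w'^j}$. Hence the subword $s=w'_{i+1}\cdots w'_j$ equals $\id$ in $\Ga_\om$, and we may choose $s$ to be a shortest such subword. Since $w'$ has no consecutive repetitions, $|s|\ge3$.

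\textbf{Step 2 (the relator acts trivially on the Schreier graph).} Because $s=\id$, it fixes every ray. In particular, applying $s$ letter by letter to the ray at the current vertex of $\sS_\om(\neg\de)$ must return to that same vertex.

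We split according to where the letters of $s$ come from. Outside copies of $u_1$, the letters of $w'$ either step rightwards or traverse a loop at most once. Inside a copy of $u_1$ the only new words are $\beta\sigma\beta a$ and $\sigma\beta\sigma a$. In $u_1$ one of $\beta,\sigma$ labels a loop and the other labels the rightward edge; say $\beta$ is the loop and $\sigma$ the rightward edge, with the labelling on the left end of $u_1$.
\begin{itemize}
\item In $\sigma\beta\sigma a$, the walk on $\sS_\om$ goes right, then around a loop at the middle vertex $v$, then back left, then along $a$. But the edge out of the left end is $a$, not the $u_1$ edges, so this reading is wrong.
\end{itemize}
The correct reading must be taken from Figure \ref{fig:units}. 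The endpoint $r_i$ has an $a$-edge to its left and a double edge $\{\beta,\sigma\}$ to its right, reaching a vertex $x$ with a loop whose label is the third letter, here $\de$ (deleted). Then $a$ leads on from $x$.

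\textbf{Step 3 (the two new words).} In the Schreier graph, $\beta\sigma\beta$ moves $r_i\to x\to r_i\to x$, which is a backtrack on the Schreier graph. In the group, however, $\beta\sigma\beta=\sigma$ only if $\beta,\sigma$ commute; they do commute, by \eqref{eq:gcommute}. So in $G_\om(\neg\de)$ the segment $\beta\sigma\beta$ traverses three distinct edges of the kite $\{g,g\beta,g\sigma,g\beta\sigma\}$, i.e.\ the 4-cycle $g,g\beta,g\beta\sigma,g\beta\sigma\beta=g\sigma$, with the $\de$-edges deleted. These four vertices are distinct because the kite is a copy of the Klein group. Thus each augmented unit visits one kite along a self-avoiding path of length $3$ and then leaves by $a$.

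\textbf{Step 4 (no two kites coincide).} It remains to show that the walk never re-enters a kite, and never revisits a vertex across units. Project $\ol{w'}$ to $\sS_\om$ by collapsing each kite segment to its Schreier image. The projected walk is then a word in $\sW_0$ that steps right monotonically, with loops traversed at most once. Two visits to the same vertex of $G_\om$ in different kites would force, as in Lemma \ref{lem:0}(b), a generator $z\ne a$ joining two Schreier vertices at distance at least $4$ along the path. This yields a cycle of length at least $4$ in the tree-like graph $\sS_\om$, a contradiction.

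Within a single kite, the only vertices visited are those of the path in Step 3, or the shorter paths from $L$. By Lemma \ref{lem:0}(b) the walk enters each kite only once, so these visits are distinct.

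The main obstacle is Step 4. We must check that replacing a two-step passage through a kite with a three-step passage does not break the argument of Lemma \ref{lem:0}(b), which was phrased for $\sW_0$. We would handle this by noting that the entry and exit vertices of each kite are unchanged by the augmentation, so the sequence of kites visited is identical to that of the corresponding word in $\sW$. Lemma \ref{lem:0} then applies verbatim to that sequence.
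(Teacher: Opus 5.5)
Your proof is correct and is essentially the paper's argument. The substitution $\sigma a\mapsto\beta\sigma\beta a$ (or $\beta a\mapsto\sigma\beta\sigma a$) replaces the single kite edge $g\to g\sigma$ by the path $g,g\beta,g\beta\sigma,g\sigma$, which passes through the two previously unvisited vertices of the same kite and keeps the entry and exit points unchanged; Lemma~\ref{lem:0}(b), applied to the underlying word in $\sW$ (extended to an infinite word via Lemma~\ref{lem:0}(c)), says kites are entered only once, and that gives self-avoidance. Steps 1--2 (the shortest-relator reduction and the retracted misreading of $u_1$) play no role and can be deleted, and in Step 4 the phrase about one vertex lying in \lq\lq different kites'' should read \lq\lq re-entering the same kite'', since kites partition $\Ga_\om$.
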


We defer the proof of this lemma for short while.

Returning to \eqref{eq:gf3}--\eqref{eq:gf4}, and grouping each odd unit with the following even unit, we deduce 
by \eqref{eq:new1} that 
\begin{equation*}
Z(\xi) \ge  A_0(\xi) \sum_{n=0}^\oo  H(\xi)^n
\end{equation*}
where 
\begin{equation*}
H(\xi)= A'(\xi)(\xi^2+2\xi^3 +\xi^4) = 2\xi^4(1 + \xi^2)(1 +\xi)^2.
\end{equation*} 
Recalling \eqref{eq:ZG}, it follows that $\mu\ge\gamma\approx 1.635$ where $1/\gamma$ is the 
positive root of $H(\xi)=1$. The lower bound of the theorem is proved. 

The (weak) upper bound on $\mu$ holds by
\cite[Thm 7.2]{GL20}, on noting that $G_\om(\neg \de)$ has degree $3$ and girth $4$. That the upper bound is a \emph{strict} inequality holds by the remark concerning the free product (Fisher) graph $K_2 * \ZZ_4$
following  \cite[Thm 7.2]{GL20},
and the strict-inequality theorem \cite[Cor.\ 4.1]{GL14}.

\begin{figure}[htbp]
\centerline{\includegraphics*[width=0.45\hsize]{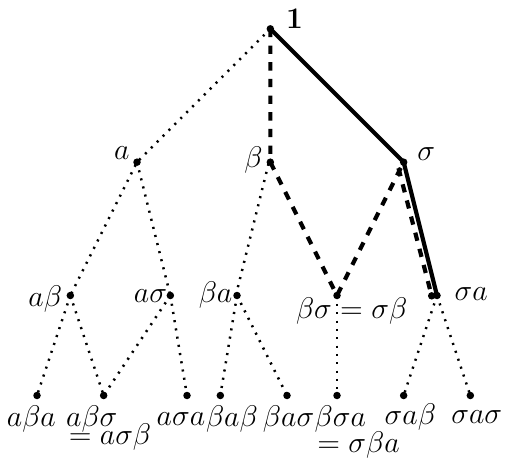}}
   \caption{The $3$-neighbourhood of the identity in $G_\om(\neg \de)$. Note that the walk $(\id,\sigma ,\sigma a)$
   may be re-routed as $(\id,\beta,\beta\sigma,\sigma,\sigma a)$.}
   \label{fig:G}
\end{figure}

\begin{proof}[Proof of Lemma \ref{eq:new901}]
Rather than reproduce the technical proof from \cite{GL20}, we present instead a shorter,
 partly pictorial proof, making
use of Figures \ref{fig:G2} and \ref{fig:G}. We note that, as in Figure \ref{fig:G2},
each element of $\Ga_\om$ lies in a \lq kite' of the Cayley graph $G_\om$, that is, in some copy of the complete graph $K_4$. The set $\sK$ of
kites forms a partition of $\Ga_\om$. (Each kite corresponds to the Cayley graph of one of the
Klein $4$-groups of $\Ga_\om$ --- see \eqref{eq:gcommute}.) 
The edges of any given kite are labelled $b_\om$, $c_\om$, $d_\om$ and the edges between 
kites are labelled $a$. Lemma \ref{lem:0} contains some properties of the set $\sW_0(\oo)$ of infinite words
on $\sS_\om$.

We consider a single instance
of the \lq additional' subword $\beta\sigma\beta a$ in $L'$, which we view
as a substitute for the element $\sigma  a$ of $L$, as illustrated in Figure \ref{fig:G}.

Let $w=\sigma ax\in\sW_0$ for some $x$, and its replacement word $w'=\beta\sigma\beta ax$. We may trace $\ol w$ and $\ol{w'}$ 
on the right side
of Figure \ref{fig:G2}. By Lemma \ref{lem:0}, $w'$ lifts to a SAW of length $|w|+2$.

The same construction may be applied to a word $w\in\sW_0$ each time it enters a new kite. Suppose
$w=x \sigma ay$ for some non-trivial $x$ ending in $a$, and some $y$. 
We may replace $x(\sigma a)y$ by $x(\beta\sigma\beta a)y$, thus obtaining another SAW.

The same argument applies to the additional $\sigma\beta\sigma a$ 
viewed as a substitute for $\beta a$.
\end{proof}

\subsubsection{Case (c)}\label{ssec:b}

Since the arguments for $z=\beta,\sigma$  are essentially the same, we may suppose $z=\beta$.
The orbital Schreier graph $\sS_\om(\neg \beta)$ is a line of copies of $u_2$ interspersed by a number of appearances of $u_1$, with the last labelled as  in Figure \ref{fig:units-b}.
The words represented in such a copy of $u_1$ are $\{\sigma a,\de a\}$. 
We replace this set by $\{\sigma a,\de a,\sigma\de\sigma a, \de\sigma\de a\}$,
very much in the style of the last proof. As in \eqref{eq:gf4},
we deduce that
\begin{equation}\label{eq:new4}
Z(\xi) \ge A_0(\xi)\sum_{n=0}^\oo \prod_{i=1}^n A_i(\xi),
\end{equation}
where each $A_i$ (with $i\ge 1$) is either $A_1(\xi):= 2\xi^2(1+\xi^2)$ or 
$A_2(\xi) := \xi^2(1+\xi)^2$. It is easily checked that $A_1(1/\phi), A_2(1/\phi) \ge 1$, so that
$Z(1/\phi)=\oo$, whence, by \eqref{eq:ZG}, $\mu(G_\om(\neg\beta))\ge \phi$. 
The upper bound on $\mu$ holds as in Section \ref{ssec:d}.

\begin{figure}[t]
\centerline{\includegraphics*[width=0.25\hsize]{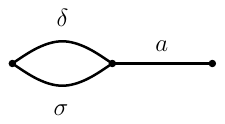}}
   \caption{The labels on every copy of $u_1$ in $\sS_\om(\neg \beta)$. 
   Note the missing loops of type $\beta$.}
      \label{fig:units-b}
\end{figure} 

Corollary \ref{cor:1} provides a quantification of the number of appearances of $u_1$,
and hence an improvement over the lower bound derived above.
Let $\pi$ be the asymptotic proportion of units $u_1$ amongst units at vertices $v_{4m}$ 
of $\sS_\om(\neg \beta)$ for $m\ge 1$. By Corollary \ref{cor:1}, 
\begin{equation}\label{eq:pos}
\pi = \sum_{a=1}^\oo  \frac1{2^{a}} 1(\la_{\om_a}=\beta),
\end{equation}
which is strictly positive whenever $\om$ has the property that there exists $a\ge 1$ such that $\la_{\om_a}=\beta$.
When this holds, we may proceed as in \eqref{eq:gf4} and \eqref{eq:new4} to find that 
$\mu(G_\om(\neg\beta))\ge \zeta$ where $\zeta$ is the reciprocal of the positive root of
the equation
\begin{equation}\label{eq:improve}
[A_1(\xi)]^\pi [A_1(\xi)]^{1-\pi}A_2(\xi)-1=0.
\end{equation} 
This root is strictly smaller than $1/\phi$.

\section*{Acknowledgements}
The author thanks Tatiana Nagnibeda for asking about connective constants of \emph{general} Grigorchuk graphs.
The proposals of Anton Malyshev concerning SAWs and Schreier graphs 
have been useful in both \cite{GL20} and the current work. ChatGPT wrote a Python
program at the request of the author, but it was incorrect.

\providecommand{\bysame}{\leavevmode\hbox to3em{\hrulefill}\thinspace}
\providecommand{\MR}{\relax\ifhmode\unskip\space\fi MR }
\providecommand{\MRhref}[2]{%
  \href{http://www.ams.org/mathscinet-getitem?mr=#1}{#2}
}
\providecommand{\href}[2]{#2}

\end{document}